\newlength\imagewidth
\newlength\imagescale
\newtheorem{Theorem}{Theorem}[section]
\newtheorem{Proposition}[Theorem]{Proposition}
\newtheorem{Lemma}[Theorem]{Lemma}
\newtheorem{Corollary}[Theorem]{Corollary}
\newtheorem{Remark}[Theorem]{Remark}
\newcommand{\RR}{\mathbb R}
\newcommand{\C}{\mathbb C}
\newcommand{\nat}{{{\rm I} \kern -.15em {\rm N} }}
\newcommand{\matr}{\mathcal{R}}
\newcommand{\be}{\begin{equation}}
\newcommand{\ee}{\end{equation}}
\newcommand{\beq}{\begin{eqnarray}}
\newcommand{\eeq}{\end{eqnarray}}
\newcommand{\beqs}{\begin{eqnarray*}}
\newcommand{\eeqs}{\end{eqnarray*}}
\newcommand{\bt}{\begin{Theorem}}
\newcommand{\et}{\end{Theorem}}
\newcommand{\br}{\begin{Remark}}
\newcommand{\er}{\end{Remark}}
\newcommand{\bc}{\begin{Corollary}}
\newcommand{\ec}{\end{Corollary}}
\newcommand{\bl}{\begin{Lemma}}
\newcommand{\el}{\end{Lemma}}
\newcommand{\bd}{\begin{definition}}
\newcommand{\ed}{\end{definition}}
\renewcommand{\geq}{\geqslant}
\renewcommand{\leq}{\leqslant}
\title{Bifurcation analysis of a coupled system between a transport equation and  an ordinary differential equation with time delay}
\author{
Serge Nicaise\footnote{Univ.  Polytechnique  Hauts-de-France, LAMAV - FR CNRS 2956, F-59313 Valenciennes, France,  (\texttt{Serge.Nicaise@uphf.fr}).}
\and
Alessandro Paolucci\footnote{Dipartimento di Ingegneria e Scienze dell'Informazione e Matematica, Universit\`{a} di L'Aquila, Via Vetoio, Loc. Coppito, 67010 L'Aquila Italy (\texttt{alessandro.paolucci2@graduate.univaq.it}).}
\and
Cristina Pignotti\footnote{Dipartimento di Ingegneria e Scienze dell'Informazione e Matematica, Universit\`{a} di L'Aquila, Via Vetoio, Loc. Coppito, 67010 L'Aquila Italy (\texttt{pignotti@univaq.it}).}
}
\date{}
\begin{document}

\textwidth=160 mm

\textheight=225mm

\parindent=8mm

\frenchspacing

\maketitle

\begin{abstract}
In this paper we analyze a coupled system between a transport equation and an ordinary differential equation with time delay (which is a simplified version of a model for kidney blood flow control). Through a careful spectral analysis we characterize the region of stability, namely the set of parameters for which the system is exponentially stable. Also, we perform a bifurcation analysis and determine some properties of the stable steady state set and the limit cycle oscillation region. Some numerical examples illustrate the theoretical results.

\end{abstract}

\vspace{5 mm}

\def\qed{\hbox{\hskip 6pt\vrule width6pt
height7pt
depth1pt  \hskip1pt}\bigskip}

\section{Introduction}

Systems with   time delay     often appear in many biological, electrical engineering systems and mechanical
applications \cite{hadeler:79,Diekmannetall:95, Mazanti, Peralta, Andrii, PeraltaK, Ammari2, Mesquita}.  In many cases, in particular for distributed parameter systems, even arbitrarily
small delays in the feedback may destabilize the system, see e.g.
\cite{datko:88,datkolagnesepolis,nicaise:06a,ammarietal:08}. The stability issue of
systems with delay is, therefore, of theoretical and practical importance.
For a rigorous presentation of the basic theory of delay differential equations we refer e.g. to \cite{Halanay, Diekmannetall:95}.

In  \cite{ForVerseptetall:15,Liu16}
a model for kidney blood flow control is studied. The kidney's main function is to act as a filter that removes waste products and excess fluid from the body.
Kidney regulates the balance of water, minerals and blood pressure through filtration, absorption and secretion of water and solutes across the surface of renal tubules, the nephrons. In \cite{kidneyIMA} is proposed a model that can be recast in the following form:

\begin{equation}
\label{model_kidney}
\begin{array}{l}
\displaystyle{ \partial_t c(x,t) = F^0(d(t))\partial_x c(x,t)+F^1( c(x,t)), \quad (x,t)\in [0,l]\times [0,+\infty),}\\
\displaystyle{ d'(t)=F^2(d(t), a(t)), \quad t\in [0,+\infty),}\\
\displaystyle{ a'(t)=F^3(c(l,t-\tau), d(t), a(t)), \quad t\in [0,+\infty),}\\
\displaystyle{c(0,t)=c_0,  \quad t\in [0,+\infty),}
\end{array}
\end{equation}
with suitable initial conditions. Here, $c(x,t)$ is the concentration of chloride ions in appropriate tubule's portion at position $x$ and time $t,$ $d(t)$ and $a(t)$ are the diameter and smooth muscle tone (activation), respectively, of the afferent arteriole at time $t.$ In particular, the derivative of $a$ depends on the chloride ions concentration at the end of the tubule's portion where the process happens, $x=l,$ at a time $t-\tau,$ where the positive constant $\tau$ is a time delay. Moreover, $F^i,$ $i=0, \dots, 3,$ are suitable functions depending on the problem's parameters. In \cite{ForVerseptetall:15,Liu16} the authors investigate the interaction among feedback mechanisms and they perform a bifurcation analysis in order to show the stability of the model in a suitable range of parameters. Numerical simulations, confirming the bifurcation results, are also illustrated.

In order to address from a rigorous mathematical point of view such a kind of models, we analyze here a simplified problem with only two variables, the activation $a(t)$ and the chloride ions concentration $c(x,t).$ In particular, we
 are interested in studying the evolution of $c:[0,l]\times [0,+\infty)\rightarrow \mathbb{C}$ and $a:[0,+\infty) \rightarrow \mathbb{C}$, with $l>0$, subject to the following coupled system:
\begin{equation}
\label{model}
\begin{array}{l}
\displaystyle{ \partial_t c(x,t) = -f\partial_x c(x,t)+\beta a(t)-\delta c(x,t), \quad (x,t)\in [0,l]\times [0,+\infty),}\\
\displaystyle{ a'(t)=c(l,t-\tau)-\alpha a(t), \quad t\in [0,+\infty),}\\
\displaystyle{ c(0,t)=0, \quad t\in [0,+\infty),}\\
\displaystyle{ c(l,s)=c_l(s), \quad s\in [-\tau ,0],}\\
\displaystyle{ c(x,0)=c_0(x), \quad x\in [0,l],}\\
\displaystyle{ a(0)=a_0>0,}
\end{array}
\end{equation}
where $\alpha, f >0$, $ \beta,\delta\in\RR$ and $\tau>0$ is the time delay.

The well-posedness of this problem is relatively easy to check by using semigroup theory. So our main goals are stability issues. Namely we want to characterize the set of parameters for which this system is exponentially stable or not. For that purpose, we first characterize the spectrum of the operator associated with this system, even in the case $\tau=0$, by showing that the eigenvalues of the operator are roots of a holomorphic function. Then according to  \cite{ForVerseptetall:15,Liu16}, we perform a bifurcation analysis, namely
we introduce the stable steady state region $\matr_-$ and the limit cycle oscillations region $\matr_+$,
and analyze some of their analytical properties. In particular we show that these sets are non empty and open. We further show that their boundaries are neglectible. We finally illustrate our theoretical results by some numerical examples.

The paper is organized as follows: first, in Section 2, we show that the system \eqref{model} is well-posed by using semigroup theory in the case $\tau>0$.  We show in Section 3 how to extend our previous results to the case $\tau=0$. Then, in Section 4, using the one-dimensional character of the differential equation in \eqref{model}, we characterize the spectrum of the operator.  Section 5 is devoted to  the bifurcation analysis and in Section 6 we end up with an illustrative example.

\section{Well-posedness results}

To show that problem  \eqref{model} is well-posed when $\tau>0$, as in \cite{nicaise:06a} (see also \cite{Batkai:05}), for any $t\geq 0$ we introduce the new variable
\begin{equation}
\label{z}
z(\rho,t):=c(l,t-\rho \tau), \quad \rho \in [0,1].
\end{equation}
Hence, system \eqref{model} is equivalent to the following one:
\begin{equation}
\label{model_2}
\begin{array}{l}
\displaystyle{ \partial _t c(x,t) = -f\partial_x c(x,t)+\beta a(t)-\delta c(x,t), \quad (x,t)\in [0,l]\times [0,+\infty),}\\
\displaystyle{ a'(t)= z(1,t)-\alpha a(t), \quad t\in [0,+\infty) ,}\\
\displaystyle{ \tau \partial_t z(\rho,t)=-\partial_\rho z(\rho, t), \quad (\rho,t)\in [0,1]\times [0,+\infty),}\\
\displaystyle{ c(0,t)=0, \quad t\in[0,+\infty),}\\
\displaystyle{ c(x,0)=c_0(x),\quad x\in [0,l],}\\
\displaystyle{ a(0)=a_0,}\\
\displaystyle{ z(\rho ,0)=c(l,-\rho\tau), \quad \rho\in [0,1],}\\
\displaystyle{ z(0,t)=c(l,t), \quad t\in [0,+\infty).}
\end{array}
\end{equation}
Consider the Hilbert space
$$
H:=L^2\left(0,l\right) \times \mathbb{C}\times L^2 \left(0,1 \right),
$$
endowed with the inner product
$$
 \left\langle
\left (
\begin{array}{l}
u\\
v\\
w
\end{array}
\right ),
\left (
\begin{array}{l}
\tilde u\\
\tilde v\\
\tilde w
\end{array}
\right )
\right\rangle_H:= \int_0^l u(x)\cdot \overline{\tilde{u}(x)} dx +v\cdot \overline{\tilde{v}}+f\tau \int_0^1 w(\rho)\cdot \overline{\tilde{w}(\rho)} d\rho,
$$
and associated norm $||\cdot||_{H}$.
Setting $U=(c,a,z)^\top$ we can rewrite \eqref{model_2} as
\begin{equation}
\label{model_3}
\begin{array}{l}
\displaystyle{ U'=AU,}\\
\displaystyle{ U(0)=U_0,}
\end{array}
\end{equation}
where the operator $A$ is defined by
$$
A\left(
\begin{array}{l}
c \\
a\\
z
\end{array}\right) = \left(
\begin{array}{l}
-f\partial_x c +\beta a -\delta c\\
\hspace{0.5 cm} -\alpha a+z(1) \\
\hspace{0.7 cm} -\frac{1}{\tau} \partial_\rho z
\end{array}\right),
$$
with domain
$$
\mathcal{D}(A)=\left\{ (c,a,z)\in H^1 ( (0,l) ) \times \mathbb{C} \times H^1 ( (0,1) )\ : \ c(0)=0, \ z(0)=c(l) \right\}.
$$
We have the following well-posedness result.
\begin{Theorem}
For any $U_0\in H$, there exists a unique solution $U\in C([0,+\infty); H)$ to \eqref{model_3}. Moreover, if $U_0\in \mathcal{D}(A)$, then
$$
U\in C([0,+\infty); \mathcal{D}(A)) \cap C^1 ([0,+\infty); H).
$$
\end{Theorem}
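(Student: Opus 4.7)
The plan is to apply the Lumer–Phillips theorem (in its shifted form, i.e., Hille–Yosida for quasi-contractive semigroups) to the operator $A$ on $H$. The domain $\mathcal{D}(A)$ is clearly dense in $H$ (smooth functions with the coupling constraint $z(0)=c(l)$ are dense), and $A$ is closed as it is a first-order differential operator with linear boundary conditions. Hence the whole task reduces to verifying (i) that $A-\omega I$ is dissipative for some real $\omega$, and (ii) that $\lambda I - A$ is surjective for one (hence all) $\lambda > \omega$.

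For dissipativity I would compute $\mathrm{Re}\,\langle AU, U\rangle_H$ for $U=(c,a,z)^\top \in \mathcal{D}(A)$ by integration by parts in both the $x$ and $\rho$ variables. Using $c(0)=0$ and $z(0)=c(l)$, the boundary contributions collapse to $-\tfrac{f}{2}|c(l)|^2$ from the transport part and $-\tfrac{f}{2}(|z(1)|^2-|z(0)|^2) = -\tfrac{f}{2}|z(1)|^2 + \tfrac{f}{2}|c(l)|^2$ from the delay part, so the two boundary terms cancel up to $-\tfrac{f}{2}|z(1)|^2 \leq 0$. The remaining terms $\mathrm{Re}(\beta a \int_0^l \bar c\,dx) - \delta\|c\|^2 - \alpha|a|^2 + \mathrm{Re}(z(1)\bar a)$ are controlled by Cauchy–Schwarz and Young's inequality: the cross term $\mathrm{Re}(z(1)\bar a)$ is absorbed into $-\tfrac{f}{2}|z(1)|^2$ and $-\alpha|a|^2$ (up to enlarging the constant), and $\mathrm{Re}(\beta a \int \bar c)$ is absorbed into $\omega(|a|^2+\|c\|^2)$. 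This yields $\mathrm{Re}\,\langle AU,U\rangle_H \leq \omega\|U\|_H^2$ for a sufficiently large $\omega>0$, accommodating arbitrary signs of $\beta$ and $\delta$.

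For the range condition, given $F=(f_1,f_2,f_3)^\top \in H$ and $\lambda>\omega$, I would solve $(\lambda I - A)U = F$ explicitly in cascade. The $z$-equation $\lambda z + \tfrac{1}{\tau}z_\rho = f_3$ is a first-order linear ODE on $[0,1]$; integrating with free endpoint datum $z(0)$ yields an explicit formula for $z(\rho)$, hence for $z(1)$, as an affine function of $z(0)$. Next the $c$-equation $f c_x + (\lambda+\delta)c = \beta a + f_1$ with $c(0)=0$ can be solved by variation of parameters, giving $c(x)$ explicitly as an affine function of $a$; evaluating at $x=l$ and enforcing $z(0)=c(l)$ expresses $z(1)$ as an affine function of $a$. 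Plugging into $(\lambda+\alpha)a - z(1) = f_2$ produces a single scalar linear equation for $a$, uniquely solvable provided the coefficient of $a$ does not vanish, which holds for $\lambda$ real and sufficiently large. Back-substitution delivers $(c,a,z)\in\mathcal{D}(A)$.

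With (i) and (ii) established, Lumer–Phillips ensures that $A$ generates a $C_0$-semigroup $(e^{tA})_{t\geq 0}$ of quasi-contractions on $H$, which yields for every $U_0\in H$ a unique mild solution $U\in C([0,+\infty);H)$; the additional regularity $U\in C([0,+\infty);\mathcal{D}(A))\cap C^1([0,+\infty);H)$ for $U_0\in\mathcal{D}(A)$ is automatic from standard semigroup theory. The main technical obstacle is the dissipativity estimate: one must carefully track the boundary contributions to see that the transport boundary term produced at $x=l$ is exactly compensated by the delay term at $\rho=0$ through the coupling $z(0)=c(l)$, leaving a genuinely negative boundary contribution $-\tfrac{f}{2}|z(1)|^2$ that is needed to absorb the cross term $\mathrm{Re}(z(1)\bar a)$.
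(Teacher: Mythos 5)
Your proposal is correct and follows essentially the same route as the paper: quasi-dissipativity of $A$ via integration by parts with the boundary cancellation $z(0)=c(l)$ leaving the good term $-\tfrac{f}{2}|z(1)|^2$ to absorb the cross term $\mathrm{Re}(z(1)\bar a)$, and surjectivity of $\lambda I-A$ by explicit cascade integration reducing to one scalar linear equation (you solve for $a$, the paper for $c(l)$, which is an inessential re-parametrization of the same non-degeneracy condition $G(\lambda)\neq 0$), followed by Lumer--Phillips.
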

\begin{proof}
Denoting by $I$ the identity operator, we show that there exists a constant $k>0$ such that the operator $A-kI$ is dissipative, namely
$$
\Re \langle AU,U \rangle_H \leq k||U||_{H}^2.
$$
We have that
$$
\begin{array}{l}
\displaystyle{  \langle AU,U \rangle_H = \int_0^l \left( -f \partial_x c(x)+\beta a-\delta c(x) \right) \cdot \overline{c(x)} dx+ (-\alpha a+z(1))\cdot \overline{a}}\\
\hspace{2 cm}
\displaystyle{ -f\int_0^1 \partial_\rho z(\rho)\cdot \overline{z(\rho)} d\rho}\\
\hspace{1.75 cm}
\displaystyle{ = -\frac{f}{2} |c(l)|^2 +\beta a\int_0^l \overline{c(x)}dx -\delta \int_0^l |c(x)|^2 dx -\alpha |a|^2 }\\
\hspace{2 cm}
\displaystyle{+z(1)\overline{a} -\frac{f}{2}|z(1)|^2+\frac{f}{2}|z(0)|^2.}
\end{array}
$$
Using Young inequality and Cauchy-Schwarz inequality yield
$$
\begin{array}{l}
\displaystyle{ \Re \langle AU,U \rangle_H \leq  -\frac{f}{2}|z(0)|^2+\frac{\beta}{2}|a|^2 +\left( \frac{\beta }{2}l-\delta\right) \int_0^l |c(x)|^2 dx }\\
\hspace{2 cm}
\displaystyle{ -\alpha |a|^2 +\frac{|a|^2}{2f}+\frac{f}{2}|z(1)|^2 -\frac{f}{2}|z(1)|^2+\frac{f}{2}|z(0)|^2}\\
\hspace{2 cm}
\displaystyle{ \leq k ||U||_{H}^2,}
\end{array}
$$
for some $k>0$. This gives us the dissipativity of the operator $A-kI$.

In order to use the Lumer-Phillips theorem, we need to prove the surjectivity of $\lambda I-A$ for some $\lambda >0$, namely we need to prove that for any $(h,m,n)^\top\in H$ there exists $(c,a,z)^\top\in \mathcal{D}(A)$ such that
$$
(\lambda I-A) \left( \begin{array}{l}
c\\ a\\ z
\end{array} \right) = \left( \begin{array}{l}
h\\m\\n
\end{array}\right).
$$
This is equivalent to solve the following system:
\begin{eqnarray}
\lambda c+f\partial_x c-\beta a +\delta c &=& h, \label{surj_1} \\
(\lambda+\alpha) a-z(1)&=&m, \label{surj_2} \\
\lambda z+\frac{1}{\tau}\partial_\rho z&=&n. \label{surj_3}
\end{eqnarray}
From \eqref{surj_3} we obtain immediately
\begin{equation}\label{sn:expressionz}
z(\rho)= e^{-\lambda \tau \rho} \left( c(l) +\int_0^\rho \tau n(\sigma) e^{\lambda \tau \sigma} d\sigma\right),
\end{equation}
and so
\begin{equation}
\label{cond_surj_1}
z(1)=e^{-\lambda \tau} \left( c(l)+\int_0^1 \tau n(\sigma) e^{\lambda\tau \sigma} d\sigma\right).
\end{equation}
Substituting \eqref{cond_surj_1} in \eqref{surj_2}, we obtain
\begin{equation}\label{sn:expressiona}
a=\frac{1}{\lambda +\alpha}\left( m+e^{-\lambda \tau} \left( c(l) +\int_0^1 \tau n(\sigma)e^{\lambda \tau \sigma} d\sigma \right) \right),
\end{equation}
which, substituted in \eqref{surj_1}, gives us a solution $c$:
\begin{equation}\label{sn:expressionc(x)}
\begin{array}{l}
\displaystyle{c(x)=\beta e^{-\lambda\tau} \frac{1-e^{-\frac{(\lambda+\delta)x}{f}}}{(\lambda+\delta)(\lambda+\alpha)} c(l)+\frac{e^{-\frac{(\lambda+\delta)x}{f}}}{f}\int_0^x h(y)e^{\frac{\lambda+\delta}{f}y} dy}\\ \\
\hspace{1 cm}
\displaystyle{ + \beta m \frac {1-e^{-\frac{(\lambda+\delta)x}{f}}} {(\lambda+\alpha)(\lambda+\delta)}+\beta \tau \frac{  1- e^{-\frac{(\lambda+\delta)x}{f}}}{(\lambda+\alpha)(\lambda+\delta)} e^{-\lambda\tau}  \int_0^1 n(\sigma)e^{\lambda \tau\sigma}d\sigma,}
\end{array}
\end{equation}
for some $\lambda$ large enough so that
$G(\lambda)$ defined by
\[
G(\lambda):=1-\beta e^{-\lambda\tau} \frac{1-e^{-\frac{\lambda+\delta}{f}l}}{(\lambda+\alpha)(\lambda+\delta)},
\]
is different from zero. Indeed in such a case, evaluating the previous expression at $l$, we will get
\begin{equation}\label{sn:expressionc(l)}
\begin{array}{l}
\displaystyle{c(l)= \frac{e^{-\frac{(\lambda+\delta)l}{f}}}{f G(\lambda)}\int_0^l h(y)e^{\frac{\lambda+\delta}{f}y} dy+\beta m\frac { 1-e^{-\frac{(\lambda+\delta)l}{f}}} {(\lambda+\alpha)(\lambda+\delta)G(\lambda)}}\\ \medskip
\hspace{2.5 cm}\displaystyle{
+\beta  \tau \frac{  1- e^{-\frac{(\lambda+\delta)l}{f}}}{(\lambda+\alpha)(\lambda+\delta)G(\lambda)} e^{-\lambda\tau}  \int_0^1 n(\sigma)e^{\lambda \tau\sigma}d\sigma.}
\end{array}
\end{equation}
Inserting this expression in \eqref{sn:expressionz}, \eqref{sn:expressiona}
and \eqref{sn:expressionc(x)}, we find $(x, a,z)^\top \in \mathcal{D}(A)$,   solution of \eqref{surj_1}-\eqref{surj_3}
and the surjectivity is proved.

Therefore, by Lumer-Phillips theorem, $A-k I$ generates a strongly continuous semigroup of contractions in $H$. Consequently  $A$ generates a strongly continuous semigroup   in $H$,  which concludes  the proof of the theorem.
\end{proof}

\section{The case $\tau=0$}

In this section, we show that the previous existence result can be extended to the case $\tau=0$. This requires
the following Hilbert setting
$$
H_0:=L^2\left(0,l\right) \times \mathbb{C},
$$
endowed with the inner product
$$
 \left\langle
\left (
\begin{array}{l}
u\\
v
\end{array}
\right ),
\left (
\begin{array}{l}
\tilde u\\
\tilde v
\end{array}
\right )
\right\rangle_{H_0}:= \int_0^l u(x)\cdot \overline{\tilde{u}(x)} dx +v\cdot \overline{\tilde{v}},
$$
and associated norm $||\cdot||_{H_0}$.
Setting $U=(c,a)^\top$ we can rewrite \eqref{model_2} with $\tau=0$ as
\begin{equation}
\label{model_30}
\begin{array}{l}
\displaystyle{ U'=A_0U,}\\
\displaystyle{ U(0)=U_0,}
\end{array}
\end{equation}
where the operator $A_0$ is defined by
$$
A_0\left(
\begin{array}{l}
c \\
a
\end{array}\right) = \left(
\begin{array}{l}
-f\partial_x c +\beta a -\delta c\\
\hspace{0.5 cm} c(l)-\alpha a
\end{array}\right),
$$
with domain
$$
\mathcal{D}(A_0)=\left\{ (c,a)\in H^1 (0,l ) \times \mathbb{C} \ : \ c(0)=0 \right\}.
$$

\begin{Theorem}
For any $U_0\in H_0$, there exists a unique solution $U\in C([0,+\infty); H_0)$ to \eqref{model_30}. Moreover, if $U_0\in \mathcal{D}(A)$, then
$$
U\in C([0,+\infty); \mathcal{D}(A_0)) \cap C^1 ([0,+\infty); H_0).
$$
\end{Theorem}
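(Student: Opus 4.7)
The plan is to mirror the proof of the $\tau>0$ case, which is in fact simplified by the absence of the $z$ variable, and apply the Lumer--Phillips theorem to the operator $A_0$. I would first establish that $A_0 - kI$ is dissipative in $H_0$ for some $k>0$, and then show surjectivity of $\lambda I - A_0$ for $\lambda$ large enough.

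For the dissipativity step, given $U=(c,a)^\top \in \mathcal{D}(A_0)$, I would compute
\[
\langle A_0 U, U\rangle_{H_0} = \int_0^l(-f\partial_x c + \beta a - \delta c)\,\overline{c(x)}\,dx + (c(l) - \alpha a)\,\bar a.
\]
An integration by parts in the first term, using the boundary condition $c(0)=0$, produces a boundary contribution $-\tfrac{f}{2}|c(l)|^2$. Taking real parts and applying Young's inequality to the cross terms $\beta a\int_0^l \overline{c(x)}\,dx$ and $c(l)\bar a$ (absorbing $\tfrac{1}{2}|c(l)|^2$ into the favorable boundary term when the constants allow, or choosing $k$ large otherwise) yields $\Re\langle A_0 U, U\rangle_{H_0} \le k\|U\|_{H_0}^2$ for some $k>0$ depending only on $\alpha,\beta,\delta,f,l$. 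This is essentially the previous calculation with the $\tfrac{f}{2}(|z(0)|^2 - |z(1)|^2)$ terms absent.

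For surjectivity I would, given $(h,m)^\top \in H_0$, seek $(c,a)^\top\in\mathcal{D}(A_0)$ solving
\[
\lambda c + f\partial_x c - \beta a + \delta c = h, \qquad (\lambda+\alpha) a - c(l) = m.
\]
From the second equation, $a=(c(l)+m)/(\lambda+\alpha)$, and substituting into the first gives a linear first-order ODE for $c$ with initial condition $c(0)=0$, whose explicit Duhamel solution reads
\[
c(x) = \frac{e^{-\frac{(\lambda+\delta)x}{f}}}{f}\int_0^x h(y)\,e^{\frac{\lambda+\delta}{f}y}\,dy + \beta\,\frac{c(l)+m}{(\lambda+\alpha)(\lambda+\delta)}\Bigl(1-e^{-\frac{(\lambda+\delta)x}{f}}\Bigr).
\]
Evaluating at $x=l$ yields a scalar equation for $c(l)$ whose coefficient is
\[
G_0(\lambda):=1-\beta\,\frac{1-e^{-\frac{(\lambda+\delta)l}{f}}}{(\lambda+\alpha)(\lambda+\delta)},
\]
and since $G_0(\lambda)\to 1$ as $\lambda\to +\infty$, it is nonzero for $\lambda$ large enough. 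Solving for $c(l)$, then recovering $c$ and $a$ from the formulas above produces the required preimage in $\mathcal{D}(A_0)$.

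The hard part is really only verifying the sign/constant bookkeeping in the dissipativity estimate; once $A_0-kI$ is dissipative and $\lambda I - A_0$ is surjective for large $\lambda$, Lumer--Phillips gives that $A_0 - kI$ generates a $C_0$-semigroup of contractions on $H_0$, hence $A_0$ generates a $C_0$-semigroup on $H_0$. Standard semigroup theory then furnishes the continuous solution for $U_0\in H_0$ and the classical solution with the regularity $C([0,+\infty);\mathcal{D}(A_0))\cap C^1([0,+\infty);H_0)$ when $U_0\in\mathcal{D}(A_0)$, completing the proof.
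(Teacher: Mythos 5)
Your proposal is correct and follows essentially the same route as the paper: dissipativity of $A_0-kI$ via integration by parts (using $c(0)=0$) plus Young's inequality, and surjectivity of $\lambda I-A_0$ by explicitly solving for $a$ and $c$ and inverting the scalar equation for $c(l)$ with coefficient $G_0(\lambda)$, which is nonzero for $\lambda$ large, followed by Lumer--Phillips. The only cosmetic difference is that the paper writes the dissipativity constant explicitly as $\gamma=\max\bigl\{\bigl|\tfrac{\beta}{2}+\tfrac{1}{2f}-\alpha\bigr|,\bigl|\tfrac{\beta l}{2}-\delta\bigr|\bigr\}$ after cancelling $\tfrac{f}{2}|c(l)|^2$ exactly against the Young bound for $c(l)\bar a$.
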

\begin{proof}
We will use, as for delayed model, the Lumer-Philips' theorem. First of all we show that there exists a constant $\gamma>0$ such that
\begin{equation}
\label{dissipativity}
\Re \langle A_0 U, U\rangle_{H_0} \leq \gamma ||U||_{H_0}^2.
\end{equation}
By definition of $\langle \cdot,\cdot\rangle_{H_0}$, we have that
$$
\begin{array}{l}
\displaystyle{
\langle A_0 U,U\rangle_{H_0} =-f\int_0^l \partial_x c(x) \cdot \overline{c(x)}dx+\beta a \int_0^l \overline{c(x)}dx-\delta \int_0^l |c(x)|^2 dx+c(l)\overline{a}-\alpha |a|^2.}
\end{array}
$$
Using Young and H\"older inequalities yields
$$
\begin{array}{l}
\displaystyle{\Re \langle A_0U,U\rangle_{H_0} \leq \left( \frac \beta 2 +\frac{1}{2f}-\alpha\right) |a|^2 + \left( \frac{\beta l}{2}-\delta \right) \int_0^l |c(x)|^2 dx.}
\end{array}
$$
Choosing
$$
\gamma=\max \left\{ \left| \frac \beta 2 +\frac{1}{2f}-\alpha \right| , \left| \frac{\beta l}{2}-\delta \right| \right\} ,
$$
we obtain \eqref{dissipativity}. In addition, we show that there exists a constant $\lambda\in\mathbb{C}$ such that $\lambda I-A_0$ is surjective, namely for any $(h,m)^\top \in H_0 $ there exists $(c,a)^\top \in \mathcal{D}(A_0)$ such that
\begin{equation}
\label{surjectivity1}
\begin{array}{l}
\displaystyle{ \lambda c +f\partial_x c-\beta a+\delta c =h,}\\
\displaystyle{ \lambda a -c(l)+\alpha a=m.}
\end{array}
\end{equation}
By the second equation of \eqref{surjectivity1} we have that for $\lambda\neq -\alpha$,
\begin{equation}
\label{sur1}
a=\frac{c(l)+m}{\lambda+\alpha},
\end{equation}
which, substituted in the first equation of \eqref{surjectivity1}, yields
\begin{equation}
\label{sur2}
c(x)=\frac 1 f e^{-\frac{\lambda+\delta}{f}x} \int_0^x h(y)e^{\frac{\lambda+\delta}{f}y} dy+\beta \frac{c(l)+m}{(\lambda+\alpha)(\lambda+\delta)}\left( 1-e^{-\frac{\lambda+\delta}{f}x}\right).
\end{equation}
If $x=l$, equation \eqref{sur2} reads as
$$
\left( 1-\beta \frac{1-e^{-\frac{\lambda+\delta}{f}l}}{(\lambda+\alpha)(\lambda+\delta)}\right) c(l)=\frac 1 f e^{-\frac{\lambda+\delta}{f}l} \int_0^l h(y) e^{\frac{\lambda+\delta}{f}y}dy+\beta \frac{m}{(\lambda+\alpha)(\lambda+\delta)}\left( 1-e^{-\frac{\lambda+\delta}{f}l}\right).
$$
Therefore, as before, we choose $\lambda$ big enough so that the function
$$
G_0(\lambda)= 1-\beta \frac{1-e^{-\frac{\lambda+\delta}{f}l}}{(\lambda+\alpha)(\lambda+\delta)}
$$
is different from $0$. Therefore, substituting $c(l)$ in \eqref{sur1} and \eqref{sur2}, it's possible to find a solution $(c,a)^\top$ such that \eqref{surjectivity1} holds. Hence, surjectivity is proved. Then, by Lumer-Philips' Theorem we obtain the thesis of the theorem.
\end{proof}

\section{Spectral Analysis}
In this section we want to find some characterization of the eigenvalues of $A$ with respect to the parameters appearing in \eqref{model}. First, we have the following lemma.
\begin{Lemma}
Consider $\beta=0$. Then, we have that $Sp(A)=\{-\alpha\}$, for any choice of the parameters $\alpha, \ l,\ f$ and $\tau$.
\end{Lemma}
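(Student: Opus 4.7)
The plan is to prove two claims: (i) $-\alpha$ is an eigenvalue of $A$, and (ii) every $\lambda \neq -\alpha$ belongs to the resolvent set $\rho(A)$.

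For (i), I would exhibit an explicit eigenvector. With $\beta=0$, the triple $U=(0,1,0)^\top$ lies in $\mathcal{D}(A)$ since the compatibility conditions $c(0)=0$ and $z(0)=c(l)=0$ are trivially satisfied, and a direct computation gives $AU = (0,-\alpha,0)^\top = -\alpha U$. Hence $-\alpha \in Sp(A)$.

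For (ii), I would recycle the surjectivity argument from the previous well-posedness proof, observing that when $\beta=0$ the auxiliary function $G(\lambda) = 1 - \beta e^{-\lambda\tau}\frac{1-e^{-(\lambda+\delta)l/f}}{(\lambda+\alpha)(\lambda+\delta)}$ collapses to the constant $1$, so the only obstruction left in formulas \eqref{sn:expressionz}--\eqref{sn:expressionc(l)} comes from the factor $(\lambda+\alpha)^{-1}$ appearing in \eqref{sn:expressiona}. Concretely, for any $(h,m,n)^\top \in H$ and any $\lambda \neq -\alpha$, formula \eqref{sn:expressionc(x)} reduces with $\beta=0$ to $c(x) = f^{-1}e^{-(\lambda+\delta)x/f}\int_0^x h(y)e^{(\lambda+\delta)y/f}dy$, which defines $c \in H^1(0,l)$ with $c(0)=0$; then $z$ is obtained from \eqref{sn:expressionz} (using $z(0)=c(l)$) and $a$ from \eqref{sn:expressiona}, yielding a solution $(c,a,z)^\top \in \mathcal{D}(A)$ of $(\lambda I -A)U = (h,m,n)^\top$.

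Finally, for injectivity at $\lambda \neq -\alpha$, I would run the same chain on the homogeneous problem: the first equation reads $f\partial_x c + (\lambda+\delta) c = 0$ together with $c(0)=0$, which forces $c \equiv 0$; the transport equation $\partial_\rho z + \lambda\tau z = 0$ with $z(0)=c(l)=0$ forces $z \equiv 0$; and then $(\lambda+\alpha)a = z(1) = 0$ together with $\lambda \neq -\alpha$ gives $a=0$. Combined with surjectivity and the closed graph theorem (or directly reading off boundedness of the resolvent from the explicit formulas), this shows $\lambda \in \rho(A)$, so $Sp(A)=\{-\alpha\}$. There is no substantive obstacle here; the proof is essentially a direct specialization of the already-established resolvent computation to the decoupled case $\beta=0$, where the transport component is forced to vanish by its boundary condition.
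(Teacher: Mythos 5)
Your proposal is correct, and its core computation (integrating the first--order equations for $c$ and $z$ and feeding the results into the equation for $a$) is the same one the paper uses. The difference is in what each argument actually establishes. The paper's proof only analyzes the eigenvalue system: it shows $c\equiv 0$, hence $z\equiv 0$, hence $(\lambda+\alpha)a=0$, and concludes that the only eigenvalue is $-\alpha$ with eigenvector $(0,1,0)^\top$; the step from ``the point spectrum is $\{-\alpha\}$'' to ``$Sp(A)=\{-\alpha\}$'' is left implicit (it would follow, e.g., from compactness of the resolvent, which the paper never invokes). You close that gap explicitly by showing that every $\lambda\neq-\alpha$ lies in $\rho(A)$: surjectivity via the specialization of \eqref{sn:expressionz}--\eqref{sn:expressionc(l)} to $\beta=0$ (where $G\equiv 1$, so the only singular factor is $(\lambda+\alpha)^{-1}$), injectivity by the same homogeneous computation as the paper, and boundedness of the inverse by closedness of $A$. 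This buys a genuinely complete proof of the stated equality of spectra, at the cost of a little more writing. Two small points: the eigenvector verification in (i) should note that $AU=(\beta,-\alpha,0)^\top$, which equals $-\alpha U$ precisely because $\beta=0$ (you do say this, but it is the only place the hypothesis enters in part (i)); and the lemma is asserted for all $\tau$, including $\tau=0$, where the operator is $A_0$ on $H_0$ with no $z$--component --- the paper treats this case separately, and you should add the one line observing that your argument specializes verbatim (the chain becomes $c\equiv 0$, then $(\lambda+\alpha)a=c(l)+m$ directly).
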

\begin{proof}
First assume that $\tau>0$, then
if $\beta=0$, $\lambda\in\C$ is an eigenvalue of $A$ if and only if $\lambda$ satisfies
\begin{eqnarray}\label{spectralsystembeta=0}
\left\{
\begin{array}{lll}
\lambda c+f\partial_x c +\delta c &=& 0, \\
(\lambda+\alpha) a-z(1)&=&0, \\
\lambda z+\frac{1}{\tau}\partial_\rho z&=&0.
\end{array}
\right.
\end{eqnarray}
From the third equation we obtain
\begin{equation}\label{sn:exprz}
z(\rho)=c(l)e^{-\lambda\tau\rho}, \quad \text{for} \ \rho\in[0,1],
\end{equation}
which yields
$$
(\lambda+\alpha)a=c(l)e^{-\lambda\tau}.
$$
From the first equation we have that
$$
c(x)\equiv 0,\quad \text{for any}\ x\in[0,l],
$$
which gives us $c(l)=0$. Hence, $(\lambda+\alpha)a=0$. This means that the only possible eigenvalue of $A$ is $\lambda=-\alpha$ with eigenvector $(0,1,0)^\top$.

If  $\tau=0$ and $\beta=0$, then $\lambda\in\C$ is an eigenvalue of $A$ if and only if $\lambda$ satisfies
\begin{eqnarray*}
\left\{
\begin{array}{lll}
\lambda c+f\partial_x c +\delta c &=& 0, \\
(\lambda+\alpha) a-c(l)&=&0.
\end{array}
\right.
\end{eqnarray*}
This corresponds to
 \eqref{spectralsystembeta=0} where $z=c(l)$ (see \eqref{sn:exprz}), hence the proof is finishing as above.
\end{proof}
Now, consider the case $\beta\neq 0$. Then, we have the following lemma.
\begin{Lemma}
\label{Lemma_roots}
If $\beta\ne0$, $\lambda \in Sp(A)$ if and only if $\lambda\ne -\alpha$ and  $G(\lambda)=0$, where we recall that
\begin{equation}
\label{G_lambda}
G(\lambda):=1-\beta e^{-\lambda\tau} \frac{1-e^{-\frac{\lambda+\delta}{f}l}}{(\lambda+\alpha)(\lambda+\delta)},
\end{equation}
with here the convention that the ratio
\[
\frac{1-e^{-\frac{\lambda+\delta}{f}l}}{\lambda+\delta}=\frac{l}{f},
\]
if $\lambda= -\delta$.
\end{Lemma}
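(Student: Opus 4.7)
The plan is to reduce the eigenvalue equation $AU = \lambda U$ with $U=(c,a,z)^\top\in\mathcal{D}(A)$ to an algebraic condition on $\lambda$, reusing almost verbatim the computation already performed in the surjectivity step of Theorem~2.1. Indeed, the system for an eigenvector is precisely \eqref{surj_1}--\eqref{surj_3} with right-hand side $(h,m,n)=(0,0,0)$, so the explicit expressions \eqref{sn:expressionz}, \eqref{sn:expressiona}, and \eqref{sn:expressionc(x)} specialize directly.

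First I would solve the three equations in the natural order. From the third equation together with the boundary condition $z(0)=c(l)$ one obtains $z(\rho)=c(l)\,e^{-\lambda\tau\rho}$, so $z(1)=c(l)\,e^{-\lambda\tau}$; substitution into the second equation gives the scalar relation
\[
(\lambda+\alpha)\,a \;=\; c(l)\,e^{-\lambda\tau}.
\]
Integrating the first equation with $c(0)=0$ yields
\[
c(x) \;=\; \frac{\beta a}{\lambda+\delta}\bigl(1-e^{-(\lambda+\delta)x/f}\bigr),
\]
with the same convention at $\lambda=-\delta$ as in the lemma. Evaluating at $x=l$ and plugging the resulting value of $c(l)$ back into the scalar relation leads, after clearing denominators, to the single compatibility condition
\[
a\,(\lambda+\alpha)\,G(\lambda) \;=\; 0.
\]

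For the necessity direction I have to discard spurious cases. If $a=0$, the formulas above force $c\equiv 0$, $c(l)=0$, and consequently $z\equiv 0$, so $U=0$ is not an eigenvector. If $\lambda=-\alpha$, the scalar relation yields $c(l)=0$; combined with the explicit formula for $c$ and the hypothesis $\beta\ne 0$, this forces $a=0$ (the borderline subcase $\alpha=\delta$ being handled exactly by the stated convention), again a contradiction. Hence any eigenvalue must satisfy $\lambda\ne -\alpha$, and dividing by $(\lambda+\alpha)a$ gives $G(\lambda)=0$.

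For the converse, assuming $\lambda\ne -\alpha$ and $G(\lambda)=0$, I would exhibit an explicit eigenvector: set $a=1$, define $c(x)$ by the formula above, and put $z(\rho)=c(l)\,e^{-\lambda\tau\rho}$. The domain conditions $c(0)=0$ and $z(0)=c(l)$ hold by construction, the regularity $c,z\in H^1$ is immediate from the explicit exponentials, and the identity $G(\lambda)=0$ is exactly what makes the scalar relation $(\lambda+\alpha)=c(l)e^{-\lambda\tau}$ consistent. The case $\tau=0$ (with $A$ replaced by $A_0$) is strictly simpler since the $z$-variable is absent and one lands on $G_0(\lambda)=0$ in the same way. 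The only real delicacy is the careful case analysis at $\lambda=-\alpha$ and at $\lambda=-\delta$ through the stated convention; everything else is routine bookkeeping.
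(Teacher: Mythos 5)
Your proposal is correct and follows essentially the same route as the paper: solve the $z$-equation, then the $a$-equation, then integrate the $c$-equation with $c(0)=0$, and read off the compatibility condition $G(\lambda)=0$, with the case $\lambda=-\alpha$ excluded by the same argument (it forces $c(l)=0$ and hence $a=0$ via $\beta\ne 0$ and the nonvanishing of the ratio, including the $\lambda=-\delta$ convention). The only cosmetic difference is the order of substitution ($c$ in terms of $a$ rather than $a$ in terms of $c(l)$), which changes nothing of substance.
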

\begin{proof}
If $\tau>0$, the spectral equation takes here the form
\begin{eqnarray}\label{sn:spectrum}
\left\{
\begin{array}{lll}
\lambda c+f\partial_x c-\beta a +\delta c &=& 0, \\
(\lambda+\alpha) a-z(1)&=&0, \\
\lambda z+\frac{1}{\tau}\partial_\rho z&=&0.
\end{array}
\right.
\end{eqnarray}
Then, as before from the third equation, $z$ is given by \eqref{sn:exprz},
and substituted in the second equation yields for $\lambda\neq-\alpha$
$$
a=\frac{c(l)e^{-\lambda\tau}}{\lambda+\alpha}.
$$
Putting $a$ in the first equation, we obtain
$$
c(x)=\beta c(l)e^{-\lambda\tau}\frac{1-e^{-\frac{(\lambda+\delta)x}{f}}}{(\lambda+\delta)(\lambda+\alpha)},
\quad \forall \, x\in  [0,l].
$$
This identity implies that
$$
c(l)=\beta c(l)e^{-\lambda\tau}\frac{1-e^{-\frac{(\lambda+\delta)l }{f}}}{(\lambda+\delta)(\lambda+\alpha)},
$$
and therefore a non trivial constant $c(l)$ exists if and only if $\lambda$ satisfies $G(\lambda)=0$.

It remains to treat the case $\lambda=-\alpha$. In such a case, the second identity of \eqref{sn:spectrum} implies that $z(1)=0$, and by the third identity of \eqref{sn:spectrum}, we deduce that $z=0$. The first identity of \eqref{sn:spectrum} yields
$$
c(x)=\beta a \frac{1-e^{-\frac{(\lambda+\delta)x}{f}}}{\lambda+\delta}, \quad
\forall x\in [0,l].
$$
Since $c(l)=z(1)=0$, we find
\[
\beta a \frac{1-e^{-\frac{(\lambda+\delta)l}{f}}}{\lambda+\delta}=0,
\]
which implies that $a=0$ since $\beta, l$ and $\frac{1-e^{-\frac{(\lambda+\delta)l}{f}}}{\lambda+\delta}$
are different from zero. Hence $\lambda=-\alpha$ is not an eigenvalue of $A$.

If  $\tau=0$, then the spectral equation takes here the form
\begin{eqnarray*}
\left\{
\begin{array}{lll}
\lambda c+f\partial_x c-\beta a +\delta c &=& 0, \\
(\lambda+\alpha) a-c(l)&=&0,
\end{array}
\right.
\end{eqnarray*}
that again corresponds to
 \eqref{sn:spectrum} where $z=c(l)$ (see again \eqref{sn:exprz}). The remainder of the proof then remains unchanged.
 \end{proof}

\bc \label{eigenvalue_-delta}
If $\beta\ne 0$ and $\delta\ne \alpha$, then $\lambda=-\delta$ is an eigenvalue for $A$ if and only if $1-\frac{\beta le^{\delta \tau}}{f(\alpha-\delta)}=0.$
\ec
\begin{proof}
From the previous Lemma, under our assumptions, we know that
$G(-\delta)=1-\frac{\beta le^{\delta \tau}}{f(\alpha-\delta)}.$
The result immediately follows.
\end{proof}

\bc \label{eigenvalue_zero}
If   $\beta\ne 0$ and $\delta\ne 0$, then $\lambda=0$ is an eigenvalue of $A$ for all $\tau\geq 0$  if and only if
$\beta=
\frac{\alpha\delta}{1-e^{-\frac{\delta l}{f}}}.
$
On the contrary, if   $\beta\ne 0$ and $\delta=0$, then $\lambda=0$ is an eigenvalue of $A$ for all $\tau\geq 0$  if and only if
$\beta=
\frac{\alpha f}{l}.
$
\ec
\begin{proof}
As before, by \eqref{G_lambda},  $G(0)=0$ if and only if (with the convention
$
\frac{1-e^{-\frac{\delta}{f}l}}{\delta}=\frac{l}{f},
$
if $\delta=0$)
\[
1-\beta  \frac{1-e^{-\frac{\delta}{f}l}}{\alpha \delta}.
\]
This directly leads to the results.
\end{proof}

Let us now notice that if $\Re\lambda\ge0$, then $\lambda$ is contained in a finite ball. This is the aim of the following lemma.
\begin{Lemma}\label{bounded_eigenvalue}
Let $\lambda$ be an eigenvalue of $A$. If $\Re \lambda \geq 0$, then there exists a constant $C_{\beta,\delta}>0$ depending on $\beta$ and $\delta$ such that $|\lambda|\leq C_{\beta,\delta}.$
\end{Lemma}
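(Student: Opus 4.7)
The plan is to invoke Lemma \ref{Lemma_roots} and study the scalar equation $G(\lambda)=0$. The case $\beta=0$ is already covered (only eigenvalue $-\alpha$), so I may assume $\beta\ne 0$. The cleanest move is to multiply out $G(\lambda)=0$ and divide by $\lambda+\delta$, rewriting the characteristic equation as
$$
\lambda+\alpha \;=\; \beta\,e^{-\lambda\tau}\, H(\lambda),
\qquad H(\lambda):=\frac{1-e^{-(\lambda+\delta)l/f}}{\lambda+\delta}.
$$
Here $H$ is an entire function (the singularity at $\lambda=-\delta$ is removable with value $l/f$, matching the convention introduced in Lemma \ref{Lemma_roots}), and it admits the integral representation
$$
H(\lambda)=\int_0^{l/f} e^{-(\lambda+\delta)s}\,ds,
$$
which sidesteps any case distinction at $\lambda=-\delta$.

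The next step is to bound the right-hand side under the hypothesis $\Re\lambda\geq 0$. Since $\tau\geq 0$, one has $|e^{-\lambda\tau}|=e^{-\tau\Re\lambda}\leq 1$. For $H(\lambda)$, I would use that, for every $s\in[0,l/f]$,
$$
s\,\Re(\lambda+\delta)\;=\;s\Re\lambda+s\delta\;\geq\; s\delta\;\geq\; -\frac{l}{f}|\delta|,
$$
which yields
$$
|H(\lambda)|\;\leq\;\int_0^{l/f} e^{-s\Re(\lambda+\delta)}\,ds\;\leq\;\frac{l}{f}\,e^{l|\delta|/f}.
$$

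Combining these two estimates gives $|\lambda+\alpha|\leq |\beta|\,(l/f)\,e^{l|\delta|/f}$, and by the triangle inequality
$$
|\lambda|\;\leq\;\alpha+|\beta|\,\frac{l}{f}\,e^{l|\delta|/f}\;=:\;C_{\beta,\delta},
$$
which is the claimed bound (note that $\alpha,f,l$ are fixed parameters, so the constant depends only on $\beta$ and $\delta$). The main thing to be careful about is precisely the potential vanishing of $\lambda+\delta$, for which the integral representation of $H$ provides a uniform, entire replacement; apart from that, the argument is just a matter of taking moduli and using $\Re\lambda\geq 0$. No separate treatment of $\tau=0$ versus $\tau>0$ is needed, since the estimate $|e^{-\lambda\tau}|\leq 1$ is valid for every $\tau\geq 0$.
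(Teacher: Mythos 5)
Your proof is correct and follows the same basic strategy as the paper's: invoke Lemma \ref{Lemma_roots}, rearrange $G(\lambda)=0$, and take moduli using $\Re\lambda\geq 0$ to bound the transcendental factors. The differences are in the details and are worth recording. The paper isolates $\lambda+\delta$ rather than $\lambda+\alpha$, bounds $|1-e^{-(\lambda+\delta)l/f}|$ by $2$ and $|\lambda+\alpha|$ from below by $|\lambda|$, and solves the resulting quadratic inequality $|\lambda|^2\leq |\delta|\,|\lambda|+2|\beta|$ to obtain $C_{\beta,\delta}=\frac{|\delta|+\sqrt{\delta^2+8|\beta|}}{2}$, a constant involving only $\beta$ and $\delta$; your constant $\alpha+|\beta|\,(l/f)\,e^{l|\delta|/f}$ also involves $\alpha$, $l$ and $f$, which is harmless for the lemma as stated (the operator is fixed) but means the paper's form is the one that literally matches the phrase ``depending on $\beta$ and $\delta$.'' On the other hand, your integral representation $H(\lambda)=\int_0^{l/f}e^{-(\lambda+\delta)s}\,ds$ is a genuine improvement on two points: it makes the convention at $\lambda=-\delta$ automatic, and it yields a bound on the exponential factor that remains valid when $\delta<0$ --- the paper's estimate $|1-e^{-(\lambda+\delta)l/f}|\leq 2$ implicitly uses $\Re(\lambda+\delta)\geq 0$ and should read $1+e^{|\delta|l/f}$ in that case, an inaccuracy your argument avoids. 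So the proposal is a valid, and in this one respect more careful, variant of the paper's argument.
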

\begin{proof}
If $\beta=0$, there is no eigenvalue $\lambda$ of $A$ such that $\Re \lambda \geq 0$, hence we can now assume that $\beta\ne0$.
Let $\lambda$ be an eigenvalue of $A$ with  $\Re \lambda \geq 0$. Then, by Lemma \ref{Lemma_roots}, as $G(\lambda)=0$
we have
\[
\lambda+\delta=
\beta e^{-\lambda\tau} \frac{1-e^{-\frac{\lambda+\delta}{f}l}}{\lambda+\alpha}.
\]
This implies that
\[
|\lambda+\delta|\leq
 \frac{2|\beta|}{|\lambda+\alpha|}\leq \frac{2|\beta|}{|\lambda|},
\]
because $|\lambda|\leq |\lambda+\alpha|.$
Hence by the triangle inequality, we get
$$
\begin{array}{l}
\displaystyle{|\lambda|\leq |\lambda+\delta|+|\delta|\leq \frac{2|\beta|}{|\lambda|}+|\delta|.}
\end{array}
$$
Hence, $0\leq |\lambda|\leq \frac{|\delta|+\sqrt{\delta^2+8|\beta|}}{2}=:C_{\beta,\delta}.$\end{proof}

Now, let us recall that  the spectral bound of the operator $A$ (see for instance \cite{MR1886588})
is defined by
$$
s(A)=\sup \ \{ \Re \lambda   \ :\ \lambda\in Sp(A)\},
$$
while
$$
s_0(A):=\inf \ \{ x> s(A) \ : \ \exists\, C_x>0 \ : \ ||R(\lambda,A)||\leq C_x \ \text{whenever} \ \Re \lambda > x\}.
$$
By Theorem 5.2.1 in \cite{MR1886588}, we know that
 $$\omega(T):=\inf \left\{ \omega \in \RR \ : \ \exists \, M_\omega >0 \ \text{such that} \ ||T(t)||\leq M_\omega e^{\omega t}, \ \forall t\geq0\right\} =s_0(A).$$ Moreover, we have the following result.
\begin{Theorem}
$s_0(A)=s(A).$
\end{Theorem}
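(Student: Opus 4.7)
The inequality $s(A)\leq s_0(A)$ always holds, so it suffices to prove the reverse. The plan is to show that for every $x>s(A)$ the resolvent $R(\lambda,A)$ is uniformly bounded on the half-plane $\{\Re \lambda > x\}$, which yields $s_0(A)\leq x$ and, by letting $x\downarrow s(A)$, the desired equality.

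The starting point is the explicit resolvent formula derived in the proof of Theorem 2.1: for $F=(h,m,n)^\top\in H$, the triple $U=R(\lambda,A)F$ is given by \eqref{sn:expressionz}, \eqref{sn:expressiona} and \eqref{sn:expressionc(x)}, the boundary value $c(l)$ being determined by \eqref{sn:expressionc(l)} through the denominator $G(\lambda)$. The main ingredient is a uniform lower bound on $|G(\lambda)|$ in $\{\Re \lambda > x\}$. I would argue in two regimes. The perturbation term $\beta e^{-\lambda \tau}(1-e^{-(\lambda+\delta)l/f})/[(\lambda+\alpha)(\lambda+\delta)]$ has modulus bounded by a constant (both $|e^{-\lambda\tau}|$ and $|1-e^{-(\lambda+\delta)l/f}|$ remain bounded once $\Re \lambda > x$) divided by $|\lambda+\alpha|\,|\lambda+\delta|$, so it tends to $0$ uniformly as $|\lambda|\to\infty$ on this half-plane; hence $|G(\lambda)|\geq 1/2$ outside some ball $\{|\lambda|\geq R\}$. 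On the compact remainder $\{\Re \lambda \geq x,\,|\lambda|\leq R\}$, the function $G$ is meromorphic with at most a simple pole at $\lambda=-\alpha$ and, by Lemma \ref{Lemma_roots} together with $x>s(A)$, has no zeros, so $|G|$ is bounded below there; the apparent singularity of the resolvent formula at $\lambda=-\alpha$ is removable because $1/G$ has a simple zero there that cancels the factor $(\lambda+\alpha)$ in the denominators of \eqref{sn:expressiona} and \eqref{sn:expressionc(x)}.

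Once this lower bound is available, every term in \eqref{sn:expressionc(l)}--\eqref{sn:expressionc(x)} can be estimated by the Cauchy-Schwarz inequality; typical integrals such as $e^{-(\lambda+\delta)l/f}\int_0^l h(y)e^{(\lambda+\delta)y/f}\,dy$ reduce to expressions bounded by $\sqrt{(1-e^{-2\Re(\lambda+\delta)l/f})/(2\Re(\lambda+\delta))}\,\|h\|_{L^2}$, which stays uniformly bounded on $\{\Re \lambda > x\}$ since the scalar function $p\mapsto (1-e^{-2pl/f})/(2p)$ is bounded on $[x+\delta,+\infty)$, extended by continuity as $l/f$ at $p=0$. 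Analogous estimates apply to the terms involving $m$ and $n$, and together they produce a constant $C>0$, independent of $\lambda$, with $\|U\|_H\leq C\|F\|_H$; the same argument, with $G_0$ of Section 3 in place of $G$, covers the case $\tau=0$. The principal obstacle is the bookkeeping of these estimates, and in particular verifying the cancellation between the prefactors $e^{-(\lambda+\delta)l/f}$ and the exponentially growing integrands $e^{(\lambda+\delta)y/f}$ uniformly across the half-plane; the subcases $\Re(\lambda+\delta)\geq 0$ and $\Re(\lambda+\delta)<0$ must be handled separately, as must the removable singularity at $\lambda=-\alpha$.
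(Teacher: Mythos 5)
Your proposal is correct and follows essentially the same route as the paper: reduce to a uniform resolvent bound on the half-plane $\{\Re\lambda>x_0\}$, obtain a uniform lower bound for $|G(\lambda)|$ by combining a far-field estimate (the perturbation term tends to $0$) with a compactness/continuity argument on the remaining bounded region, and then control $c(l)$, $c$, $a$, $z$ via the explicit formulas and Cauchy--Schwarz. Your additional remarks on the removable singularity at $\lambda=-\alpha$ and on the uniformity of the exponential integrals address details the paper's proof passes over silently, but they do not change the argument.
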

\begin{proof}
We show that for all $x_0>s(A)$ there exists a constant $C_{x_0}>0$ such that
\begin{equation}
\label{sn:resolestimate}
||R(\lambda,A)||\leq C_{x_0}, \forall \,\lambda\in \C\quad\mbox{\rm such that}\quad \Re\lambda>x_0.
\end{equation}
For $(h,m,n)^\top\in H$, the vector
$$
 \left( \begin{array}{l}
c\\ a\\ z
\end{array} \right) = R(\lambda,A) \left( \begin{array}{l}
h \\m \\ n
\end{array} \right)
$$
is actually solution of \eqref{surj_1}-\eqref{surj_3}. Consequently,
$x, a$ and $z$ are respectively given by \eqref{sn:expressionc(x)}, \eqref{sn:expressiona}
and \eqref{sn:expressionz} with $c(l)$ given by
\eqref{sn:expressionc(l)}. So, in a first step we need to estimate from below $|G(\lambda)|$.

Hence fix $x_0>s(A)$, and let $\lambda=x+i\omega$, with arbitrary  $x> x_0$ and $\omega\in \mathbb{R}$, by \eqref{G_lambda} we notice that
\begin{eqnarray}
\nonumber
|G(\lambda)-1|&=&|\beta| e^{-x\tau}\frac{|1-e^{-\frac{(\lambda+\delta)l}{f}}|}{|\lambda+\alpha||\lambda+\delta|}
\\
&\leq &|\beta| e^{-x\tau}\frac{1+e^{-\frac{(x+\delta)l}{f}}}{((x+\alpha)^2+\omega^2)^\frac{1}{2}
((x+\delta)^2+\omega^2)^\frac{1}{2}}\,.\label{sn:1}
\end{eqnarray}
First, for $x$ large enough, we  notice that
\begin{eqnarray*}
|G(\lambda)-1| \leq  |\beta| e^{-x\tau}\frac{1+e^{-\frac{(x+\delta)l}{f}}}{(x+\alpha)
(x+\delta)},
\end{eqnarray*}
and since this right-hand side tends to zero as $x$ goes to infinity, there exists $x_1>0$ large enough such that
\begin{eqnarray*}
|G(\lambda)-1| \leq \frac{1}{2}, \quad \hbox{ for } x\geq x_1,
\end{eqnarray*}
which implies
\begin{eqnarray*}
|G(\lambda)| \geq \frac{1}{2}, \quad \hbox{ for } x\geq x_1.
\end{eqnarray*}

Now for $x\in [x_0, x_1]$, and $\omega\ne 0$, \eqref{sn:1} implies that
\[
|G(\lambda)-1|\leq |\beta| e^{-x_0\tau}\frac{1+e^{-\frac{(x_0+\delta)l}{f}}}{\omega^2}.
\]
As this right-hand side tends to zero as
$|\omega|\rightarrow +\infty$, there exists $\lambda_{x_0}>0$ such that
\begin{eqnarray*}
|G(\lambda)| \geq \frac{1}{2}, \quad \hbox{ for }|\omega|\geq \lambda_{x_0}.
\end{eqnarray*}
Finally introduce the compact set
\[
K=\{x+i\omega: x\in [x_0, x_1] , \ |\omega|\leq \lambda_{x_0}\},
\]
and the mapping
\[
K\to \mathbb{R}: \lambda\to |G(\lambda)|.
\]
Since this mapping is continuous in $K$ and is different from zero on $K$,   there exists $\alpha_{x_0}>0$ such that
\[
 |G(\lambda)|\geq \alpha_{x_0}>0, \hbox{ for } \lambda \in K.
 \]
 All together we have shown that there exists a positive constant $m_{x_0}$ such that
 \[
 |G(\lambda)|\geq m_{x_0}>0, \ \forall \lambda\in \C, \ \Re\lambda \geq x_0.
 \]

 With the help of this estimate in \eqref{sn:expressionc(l)}, and using Cauchy-Schwarz's inequality we find that
 \[
 |c(l)|\leq D_{x_0} ||(h,m,n)^\top||_H,
 \]
 for some positive constant $D_{x_0}$. Using this estimate in \eqref{sn:expressionc(x)}, \eqref{sn:expressiona}
and \eqref{sn:expressionz} and again  Cauchy-Schwarz's inequality, we deduce that
 \[
 ||(c,a,z)^\top||_H\leq C_{x_0} ||(h,m,n)^\top||_H,
 \]
 for some positive constant $C_{x_0}$, which is exactly \eqref{sn:resolestimate}.
\end{proof}

\section{Bifurcation Analysis}
Let us consider the following set:
$$
\mathcal{R}:=\left\{ (\alpha,\beta,\delta,l,f,\tau)\in \RR^6 \ :\ \alpha,\beta, l \in (0,+\infty), \ \delta,f\in\RR, \ \tau\in[0,+\infty) \right\}.
$$
For any $X\in\mathcal{R}$, let us write $A_X$ the corresponding generator of $T(t)$ with given coefficients, and $G_X(\lambda)$ as in \eqref{G_lambda}. Moreover, we define the following sets:
\begin{eqnarray}
\mathcal{R}_- &=& \left\{ X\in \mathcal{R}\ : \ \forall\, \lambda \in Sp(A_X), \ \Re\lambda <0\right\},\\
\mathcal{R}_0 &=& \left\{ X\in \mathcal{R}\ : \ Sp(A_X)\cap i\RR \neq \emptyset \right\},\\
\mathcal{R}_+ &=& \left\{ X\in \mathcal{R}\ : \ \exists \,\lambda \in Sp(A_X) \ \mbox{\rm  with} \ \Re\lambda >0\right\}.
\end{eqnarray}

We want to show that $\mathcal{R}_-$ is not empty. To this purpose, consider the following energy functional
\begin{equation}
\label{energy}
E(t)=\frac{1}{2}\int_0^l |c(x,t)|^2dx+\frac{1}{2}|a(t)|^2+\frac{\gamma}{2}\int_{t-\tau}^t e^{-(t-s-\tau)}|c(l,s)|^2 ds,
\end{equation}
where $\gamma$ is a positive coefficient that we will fix later. Then we have the following result.
\begin{Theorem}\label{thm:expdecay}
Let $(c,a)$ be the solution to \eqref{model}. Suppose that
\begin{equation} \label{assumptions_constants}
f(2\alpha-\beta )>1,\qquad \delta> \frac{\beta l}{2} >0,
\end{equation}
and assume that $\tau$ satisfies
\begin{equation}
\label{condition_tau}
e^\tau < f(2\alpha-\beta).
\end{equation}
Then, there exist $C,K>0$ such that
\begin{equation}
\label{exp_decay}
E(t)\leq Ce^{-Kt}, \qquad \forall t\geq 0.
\end{equation}
\end{Theorem}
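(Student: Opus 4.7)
The strategy is to show that the Lyapunov functional $E(t)$ defined in \eqref{energy} satisfies a differential inequality of the form $E'(t)\leq -K E(t)$ for a suitable positive constant $K$ and a suitable choice of $\gamma>0$, from which \eqref{exp_decay} follows by a Gronwall argument. The parameter $\gamma$ is free and will be fixed at the end to absorb all leftover boundary contributions.

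I would first differentiate each of the three parts of $E(t)$ along solutions of \eqref{model}. For the $L^2$-term in $c$, an integration by parts on $[0,l]$ using the boundary condition $c(0,t)=0$ produces the useful boundary contribution $-\frac{f}{2}|c(l,t)|^2$, while the cross term $\Re(\beta a\int_0^l\overline{c}\,dx)$ is controlled via Young and Cauchy--Schwarz by $\frac{\beta}{2}|a|^2+\frac{\beta l}{2}\int_0^l|c|^2\,dx$. The second term yields $\Re(c(l,t-\tau)\overline{a})-\alpha|a|^2$, and Young's inequality with a free parameter $\epsilon>0$ splits the cross contribution as $\frac{1}{2\epsilon}|c(l,t-\tau)|^2+\frac{\epsilon}{2}|a|^2$. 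Finally, using $\partial_t e^{-(t-s-\tau)}=-e^{-(t-s-\tau)}$, the derivative of the delay integral gives $\frac{\gamma}{2}\bigl(e^{\tau}|c(l,t)|^2-|c(l,t-\tau)|^2\bigr)-\frac{\gamma}{2}\int_{t-\tau}^t e^{-(t-s-\tau)}|c(l,s)|^2\,ds$.

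Collecting everything, $E'(t)$ is bounded by a sum of five squared terms with coefficients depending on $\epsilon$ and $\gamma$, plus the already negative weighted-delay integral. The decisive step --- which I expect to be the main obstacle --- is to pick $\epsilon,\gamma>0$ making all five coefficients non-positive. The four non-trivial requirements are $\epsilon<2\alpha-\beta$ (for the $|a|^2$ coefficient), $\beta l/2<\delta$ (already part of \eqref{assumptions_constants}), $\gamma e^{\tau}<f$ (for $|c(l,t)|^2$) and $\gamma\epsilon\geq 1$ (for $|c(l,t-\tau)|^2$). Eliminating $\epsilon$, compatibility reduces to $1/(2\alpha-\beta)<\gamma<fe^{-\tau}$, which admits a solution precisely when $e^{\tau}<f(2\alpha-\beta)$; this is exactly assumption \eqref{condition_tau}, combined with the strict positivity $2\alpha-\beta>1/f>0$ coming from \eqref{assumptions_constants}. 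These are the only places where the hypotheses are used.

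With such an admissible pair $(\epsilon,\gamma)$ fixed, each surviving negative coefficient in the estimate of $E'(t)$ can be compared to the corresponding term of $E(t)$: the $|a|^2$ and $\int_0^l|c|^2\,dx$ terms match directly, while the weighted delay integral in $E'(t)$ dominates its counterpart in $E(t)$ up to a multiplicative factor, since the weight $e^{-(t-s-\tau)}$ lies in $[1,e^{\tau}]$ on $[t-\tau,t]$. This produces $E'(t)\leq -K E(t)$ for some explicit $K>0$ depending on the data, and Gronwall's inequality then yields \eqref{exp_decay} with $C=E(0)$.
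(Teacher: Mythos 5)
Your proposal is correct and follows essentially the same route as the paper: differentiate the functional $E$, integrate by parts to extract $-\tfrac{f}{2}|c(l,t)|^2$, apply Young/Cauchy--Schwarz, and choose $\gamma\in\bigl(\tfrac{1}{2\alpha-\beta},\,fe^{-\tau}\bigr]$, which is nonempty exactly under \eqref{condition_tau}. The only cosmetic difference is your extra Young parameter $\epsilon$; the paper takes $\epsilon=1/\gamma$ directly so that the $|c(l,t-\tau)|^2$ terms cancel exactly, and after eliminating $\epsilon$ your compatibility condition reduces to the same one.
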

\begin{proof}
Differentiating \eqref{energy} and using \eqref{model} yield
$$
\begin{array}{l}
\displaystyle{\frac{dE(t)}{dt}= -f|c(l,t)|^2+\beta a(t)\int_0^l c(x,t)dx-\delta\int_0^l|c(x,t)|^2dx+a(t)c(l,t-\tau)}\\
\hspace{2 cm}
\displaystyle{-\alpha|a(t)|^2+\frac{\gamma}{2}
e^\tau |c(l,t)|^2-\frac{\gamma}{2}|c(l,t-\tau)|^2-\frac{\gamma}{2}\int_{t-\tau}^t e^{-(t-s-\tau)}|c(l,s)|^2 ds.}
\end{array}
$$
By Young and H\"older's inequalities we obtain
$$
\begin{array}{l}
\displaystyle{\frac{dE}{dt}\leq \left( -\frac{f}{2}+\frac{\gamma}{2}e^\tau \right) |c(l,t)|^2+\left( \frac{\beta}{2}+\frac{1}{2\gamma}-\alpha \right) |a(t)|^2}\\
\hspace{2.35 cm}
\displaystyle{+\left( \frac{\beta l}{2}-\delta\right) \int_0^l |c(x,t)|^2 dx -\frac{\gamma}{2}\int_{t-\tau}^t e^{-(t-s-\tau)}|c(l,s)|^2 ds.}
\end{array}
$$
Now, we want to show that there exists a constant $K>0$ independent of $t$ such that for any $t\geq 0$
\begin{equation}
\label{aim}
\frac{dE}{dt}\leq - KE(t).
\end{equation}
To do so, we need that
\begin{equation}
\label{tau1}
-\frac{f}{2}+\frac{\gamma}{2}e^\tau\leq 0,
\end{equation}
\begin{equation}\label{tau2}
\frac{\beta}{2}+\frac{1}{2\gamma}-\alpha<0,
\end{equation}
\begin{equation}\label{other_condition}
\frac{\beta l}{2}-\delta <0.
\end{equation}
Condition \eqref{other_condition} is satisfied by assumption \eqref{assumptions_constants}. At the same time \eqref{tau1} and \eqref{tau2} give us
\begin{equation}\label{C1}
\frac{1}{2\alpha-\beta}<\gamma\leq fe^{-\tau}.
\end{equation}
In order to obtain \eqref{C1}, we need
$$
\frac{1}{2\alpha -\beta}< fe^{-\tau},
$$
which is true for any $\tau$ satisfying \eqref{condition_tau}. Hence, choosing $\gamma \in \left( \frac{1}{2\alpha-\beta}, fe^{-\tau}\right]$, inequality \eqref{aim} holds for
$$
K:=\min \left\{ \frac \gamma 2,\alpha-\frac{\beta}{2}-\frac{1}{2\gamma}, \delta -\frac{\beta l}{2}\right\}.
$$
Therefore, \eqref{exp_decay} immediately follows and this concludes the proof of the theorem.
\end{proof}
This theorem shows that the semigroup generated by $A_X$ is exponentially stable if
$X\in \mathcal{R}$ satisfies \eqref{assumptions_constants}-\eqref{condition_tau}. For such an element $X$,
$s_0(A_X)<0$, and  proves the following lemma.
\begin{Lemma}
$\mathcal{R}_-\neq \emptyset$.
\end{Lemma}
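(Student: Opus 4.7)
The plan is to exhibit one explicit $X\in\mathcal{R}$ lying in the decay regime of Theorem \ref{thm:expdecay}, to translate the energy estimate \eqref{exp_decay} into a decay bound for the associated semigroup $T(t)=e^{tA_X}$, and then to invoke the identity $\omega(T)=s_0(A_X)=s(A_X)$ established in the previous section to conclude that every $\lambda\in Sp(A_X)$ has strictly negative real part.

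First I would produce such a point. Taking for instance $l=1$, $\alpha=2$, $\beta=1$, $\delta=1$, $f=1$ and $\tau=\tfrac{1}{2}$, one checks $f(2\alpha-\beta)=3>1$, $\delta=1>\beta l/2=\tfrac{1}{2}$ and $e^{\tau}=e^{1/2}<3$, so \eqref{assumptions_constants} and \eqref{condition_tau} are simultaneously satisfied. (More generally, whenever $f(2\alpha-\beta)>1$ and $\delta>\beta l/2>0$ hold, the set of admissible $\tau$ contains a nontrivial interval around $0$, so the collection of admissible parameters is already sizeable.)

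Second, I would relate the energy $E(t)$ defined in \eqref{energy} to the Hilbert norm $||U(t)||_H^2$ of the trajectory $U=(c,a,z)^\top$ of \eqref{model_3}. The substitution $s=t-\rho\tau$ in the delay term of $E$ yields
\[
\frac{\gamma}{2}\int_{t-\tau}^t e^{-(t-s-\tau)}|c(l,s)|^2\,ds=\frac{\gamma\tau}{2}\int_0^1 e^{(1-\rho)\tau}|z(\rho,t)|^2\,d\rho,
\]
with weight $e^{(1-\rho)\tau}\in[1,e^\tau]$. Comparing with the definition of $||\cdot||_H$ produces positive constants $c_1,c_2$ such that $c_1||U(t)||_H^2\leq E(t)\leq c_2||U(t)||_H^2$ for every $t\geq 0$.

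Combining this equivalence with \eqref{exp_decay} and the bound $E(0)\leq c_2||U_0||_H^2$ gives a constant $M>0$ such that $||T(t)U_0||_H\leq M e^{-Kt/2}||U_0||_H$ for every $U_0\in\mathcal{D}(A_X)$, and hence, by density, for every $U_0\in H$. This forces $\omega(T)\leq -K/2<0$, and since $\omega(T)=s_0(A_X)=s(A_X)$, every $\lambda\in Sp(A_X)$ satisfies $\Re\lambda<0$. Therefore $X\in\mathcal{R}_-$. The only nontrivial (yet very mild) technical point is the energy-norm equivalence above; everything else is a direct transcription of facts already proved.
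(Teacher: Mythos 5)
Your proposal is correct and follows essentially the same route as the paper: the paper derives this lemma directly from Theorem \ref{thm:expdecay} by observing that the energy decay gives exponential stability of the semigroup, hence $s_0(A_X)<0$, and then $s(A_X)=s_0(A_X)$ forces all eigenvalues into the open left half-plane. You merely make explicit two details the paper leaves implicit, namely a concrete admissible parameter choice and the equivalence between $E(t)$ and $\|U(t)\|_H^2$, both of which check out.
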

\begin{Lemma}
We have that
\begin{enumerate}
\item $\mathcal{R}_0\neq\emptyset$;
\item $\matr _+\neq\emptyset$.
\end{enumerate}
\end{Lemma}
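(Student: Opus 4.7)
The plan is to exhibit explicit parameter choices in $\mathcal R$ that land in $\mathcal R_0$ and $\mathcal R_+$ respectively, using the characterization of the spectrum through the zeros of $G(\lambda)$ given by Lemma \ref{Lemma_roots} and its corollaries. In both cases we keep $\alpha,\beta,\delta,l,f>0$ and $\tau\geq 0$ arbitrary; under those sign conditions, the convention in Lemma \ref{Lemma_roots} is irrelevant because $\lambda=-\alpha$ and $\lambda=-\delta$ are never going to be the candidate eigenvalues we construct.

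For the first assertion, $\mathcal R_0\ne\emptyset$, I would simply invoke Corollary \ref{eigenvalue_zero}. Fix any positive $\alpha,\delta,l,f$ and any $\tau\geq0$, and set
\[
\beta:=\frac{\alpha\delta}{1-e^{-\delta l/f}}>0.
\]
Then by that corollary, $G(0)=0$, so by Lemma \ref{Lemma_roots} the value $\lambda=0$ is an eigenvalue of $A_X$. Since $0\in i\mathbb R$, the resulting $X$ belongs to $\mathcal R_0$.

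For $\mathcal R_+\ne\emptyset$, I would use the intermediate value theorem applied to the continuous real function $\lambda\mapsto G(\lambda)$ on $[0,+\infty)$. Keeping $\alpha,\delta,l,f>0$ fixed, one observes from \eqref{G_lambda} that $G(\lambda)\to1$ as $\lambda\to+\infty$ along the real axis (the exponential $e^{-\lambda\tau}$ kills the term for $\tau>0$, while for $\tau=0$ the denominator $(\lambda+\alpha)(\lambda+\delta)$ grows like $\lambda^2$ and the numerator is bounded). On the other hand,
\[
G(0)=1-\beta\,\frac{1-e^{-\delta l/f}}{\alpha\delta},
\]
which becomes strictly negative as soon as we take
\[
\beta>\frac{\alpha\delta}{1-e^{-\delta l/f}}.
\]
For such a choice, $G(0)<0<\lim_{\lambda\to+\infty}G(\lambda)$, so by continuity there exists $\lambda_*>0$ with $G(\lambda_*)=0$. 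Since $\lambda_*>0\ne-\alpha$, Lemma \ref{Lemma_roots} ensures $\lambda_*\in Sp(A_X)$ with $\Re\lambda_*>0$, whence $X\in\mathcal R_+$.

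I do not expect a real obstacle here: both constructions are elementary once the spectrum has been identified with the zero set of $G$. The only point that requires a brief justification is the asymptotic behavior $G(\lambda)\to 1$ as $\lambda\to+\infty$ along the real axis, which must be separated into the subcases $\tau>0$ (exponential decay of $e^{-\lambda\tau}$) and $\tau=0$ (polynomial decay through the denominator), and a one-line verification that the chosen $\lambda_*$ avoids the exceptional values $-\alpha$ and $-\delta$, which is immediate from $\alpha,\delta>0$ and $\lambda_*>0$.
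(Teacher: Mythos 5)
Your proposal is correct. For part~1 you do essentially what the paper does: both invoke Corollary \ref{eigenvalue_zero} to force $\lambda=0\in Sp(A_X)$, the only difference being that the paper uses the $\delta=0$ branch (taking $\alpha=\beta l/f$) while you use the $\delta>0$ branch (taking $\beta=\alpha\delta/(1-e^{-\delta l/f})$); both give a point of $\matr_0$. For part~2 your route is genuinely different. The paper fixes $\delta=0$, $\alpha=f=1$, imposes that $\lambda=1$ be a root of $G$, and solves that equation for $\tau$, which forces the specific choice $\tau=\ln\bigl(\beta(1-e^{-l})/2\bigr)$ and requires $\beta(1-e^{-l})>2$ so that $\tau\geq 0$. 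You instead keep all parameters free, note that $G$ is real-valued and continuous on $[0,+\infty)$, that $G(0)=1-\beta(1-e^{-\delta l/f})/(\alpha\delta)<0$ once $\beta>\beta_0:=\alpha\delta/(1-e^{-\delta l/f})$, and that $G(\lambda)\to 1$ as $\lambda\to+\infty$ (in both subcases $\tau>0$ and $\tau=0$), so the intermediate value theorem produces a positive real root $\lambda_*$, hence a point of $\matr_+$. Your argument is both simpler and stronger: it works for \emph{every} $\tau\geq 0$ and every $\beta>\beta_0$, so it already yields the inclusion of Lemma \ref{l:beta>beta_0} of the paper without the sign hypothesis \eqref{condG(0)} that the paper needs there to make the auxiliary function $h(x)=(x+\alpha)(x+\delta)/(1-e^{-(x+\delta)l/f})$ monotone (the price being that you only get existence of a positive eigenvalue, not the finer description of the set of admissible delays that the monotonicity argument provides). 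All the small points you flag (continuity of $G$ on $[0,\infty)$ for $\alpha,\delta>0$, the two asymptotic subcases, and $\lambda_*\neq-\alpha$) are indeed the only things to check, and they go through.
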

\begin{proof}
In order to prove the first statement, we can use Corollary \ref{eigenvalue_zero} with $\delta=0$. So, we can say that if $X=\left( \frac{\beta l}{f}, \beta ,0,l,f,\tau\right)$, then $\lambda=0$ is an eigenvalue of $A_X$. Hence, $X\in\matr_0$.

Now, take again $\delta=0$, and $\alpha=f=1$. We notice that $\lambda=1$ is an eigenvalue if and only if
$$
\tau=\ln \frac{\beta ( 1-e^{-l})}{2}\,.
$$
This yields an element of $\matr _+$ provided
$$
\frac{\beta ( 1-e^{-l})}{2}>1.
$$
This proves the second statement.
\end{proof}
We have just shown that $\matr _+$ contains the points
$(1,\beta,0,l,1,\tau)$ provided $
\tau=\ln \frac{\beta ( 1-e^{-l})}{2}\,.
$   and $\beta$ is large enough. But this set is much larger as the next results show.
\begin{Lemma}\label{l:beta>beta_0}
For $\delta>0$ and $f>0$, let us set (see Corollary \ref{eigenvalue_zero})
\[
\beta_0(\alpha,\delta, l,f) :=\frac{\alpha\delta}{1-e^{-\frac{\delta l}{f}}}.
\]
Assume that
\begin{equation}\label{condG(0)}
e^{\frac{\delta}{f}l}-1-\frac{l}{f}\frac{\alpha\delta}{\alpha+\delta}\geq  0.
\end{equation}
Then
\be\label{eq:serge19/6:3}
\left\{ (\alpha,\beta,\delta,l,f,\tau)\in \matr : \ \delta>0,\  f>0, \ \beta>\beta_0(\alpha,\delta, l,f)\right\}\subset
\matr _+.
\ee
\end{Lemma}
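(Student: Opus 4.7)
The plan is to produce, for any $X=(\alpha,\beta,\delta,l,f,\tau)\in\matr$ with $\delta,f>0$ and $\beta>\beta_0(\alpha,\delta,l,f)$, a real eigenvalue $\lambda^*>0$ of $A_X$, which will immediately give $X\in\matr_+$. By Lemma~\ref{Lemma_roots} this reduces to exhibiting a real $\lambda^*>0$ with $G_X(\lambda^*)=0$; since automatically $\lambda^*\ne -\alpha$, such a root is indeed an eigenvalue of $A_X$.

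The whole argument is then a one-variable intermediate value theorem on the half-line $[0,+\infty)$. Write $G_X(\lambda)=1-\beta\,\phi(\lambda)$, where
\[
\phi(\lambda):=e^{-\lambda\tau}\,\frac{1-e^{-(\lambda+\delta)l/f}}{(\lambda+\alpha)(\lambda+\delta)}.
\]
Since $\alpha,\delta,l,f>0$, the function $\phi$ is continuous and strictly positive on $[0,+\infty)$, with
\[
\phi(0)=\frac{1-e^{-\delta l/f}}{\alpha\delta}=\frac{1}{\beta_0(\alpha,\delta,l,f)}.
\]
Hence $G_X(0)=1-\beta/\beta_0(\alpha,\delta,l,f)<0$ by the hypothesis $\beta>\beta_0$. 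For the behaviour at infinity, use $|e^{-\lambda\tau}|\leq 1$ and $|1-e^{-(\lambda+\delta)l/f}|\leq 1$ for $\lambda\geq 0$ to get $0<\phi(\lambda)\leq 1/[(\lambda+\alpha)(\lambda+\delta)]\to 0$ as $\lambda\to+\infty$, so $G_X(\lambda)\to 1$. By continuity and the intermediate value theorem there exists $\lambda^*\in(0,+\infty)$ with $G_X(\lambda^*)=0$, and Lemma~\ref{Lemma_roots} concludes the proof.

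Concerning the technical assumption \eqref{condG(0)}: a direct computation shows that it is equivalent to $\frac{d}{d\lambda}\Big|_{\lambda=0}\frac{1-e^{-(\lambda+\delta)l/f}}{(\lambda+\alpha)(\lambda+\delta)}\leq 0$, which in turn forces $\phi'(0)\leq 0$ and hence $G_X'(0)\geq 0$ for every $\tau\geq 0$. It is not strictly needed in the IVT argument sketched above, but it gives the natural monotonicity near $\lambda=0$ and underlies the complementary bifurcation-theoretic picture: the implicit function theorem applied at the critical value $\beta=\beta_0$ produces a smooth branch $\lambda(\beta)$ of real roots of $G_X$ with $\lambda(\beta_0)=0$ and $\lambda'(\beta_0)>0$, which explains \emph{how} the real eigenvalue crosses into the right half-line as $\beta$ increases past $\beta_0$.

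I do not expect any serious obstacle: the key points are simply (i) the sign computation $G_X(0)<0$, which drops out of the definition of $\beta_0$, and (ii) the decay of $\phi$ at $+\infty$, which is trivial because $\delta>0$ keeps the denominator $(\lambda+\alpha)(\lambda+\delta)$ bounded below by $\alpha\delta>0$ and growing like $\lambda^2$. The only point requiring a little care is tracking the role of $\tau$: the argument is uniform in $\tau\geq 0$, and no smallness of $\tau$ is imposed.
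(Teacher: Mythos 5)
Your proof is correct, but it follows a genuinely different route from the paper's. The paper fixes $\beta>\beta_0$ and parametrizes the candidate positive real eigenvalues: writing the spectral condition as $e^{-x\tau}=h(x)/\beta$ with $h(x)=(x+\alpha)(x+\delta)/(1-e^{-(x+\delta)l/f})$, it uses hypothesis \eqref{condG(0)} to prove that $h$ is increasing on $[0,\infty)$, solves for $\tau(x)=-\ln\left(h(x)/\beta\right)/x$ on $(0,x_\beta]$, and shows that the range of $\tau(\cdot)$ is all of $[0,\infty)$, which gives the inclusion one value of $\tau$ at a time. You instead fix the whole parameter point $X$ (including $\tau$) and run a one-variable intermediate value argument on $G_X$ restricted to the real half-line: $G_X(0)=1-\beta/\beta_0<0$ and $G_X(\lambda)\to 1$ as $\lambda\to+\infty$, so $G_X$ has a positive real zero, which by Lemma \ref{Lemma_roots} is an eigenvalue. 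Both arguments are sound; yours is shorter and, notably, never invokes \eqref{condG(0)} --- the sign of $G_X(0)$ and the decay at $+\infty$ of the quotient in \eqref{G_lambda} hold for every $\delta, f>0$ and every $\tau\geq 0$. So your argument actually establishes the inclusion \eqref{eq:serge19/6:3} without the hypothesis \eqref{condG(0)}, and in particular it also yields the conclusion of Corollary \ref{c:beta>beta_0} for the full range $\tau\in[0,+\infty)$ rather than the truncated interval stated there; that restriction is an artifact of the monotonicity-based parametrization. What the paper's approach buys in exchange is finer structural information: uniqueness of the positive real eigenvalue and an explicit monotone correspondence between $x$ and $\tau$ along the branch, which a bare intermediate value argument does not provide.
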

\begin{proof}
Fix $\alpha>0$, $\delta>0$, $f>0$, and for shortness we skip the dependency in $\alpha,\delta, l$, and $f$. For all $\beta>\beta_0$,  we
 look for a positive real eigenvalue $x$ of $A$. Owing to Lemma
\ref{Lemma_roots} (since under our assumptions
$\beta_0$ is positive), $x\ge 0$  is an eigenvalue of $A$ if and only if
\be\label{eq:serge19/6:1}
e^{-x \tau}=\frac{h(x)}{\beta},
\ee
where
\[
h(x)=\frac{(x+\alpha)(x+\delta)}{1-e^{-\frac{x+\delta}{f}l}}, \ \forall x\geq 0.
\]
We first show that $h$ is an increasing function, by proving that its derivative is positive on $(0,\infty).$
Since for all $x\geq 0$, $1-e^{-\frac{x+\delta}{f}l}>0$, one readily checks that
$h'(x)>0$, for all  $x\geq 0$
if and only if
\be\label{eq:Gpos}
G(x):=e^{\frac{x+\delta}{f}l}-1-\frac{l}{f}\frac{(x+\alpha)(x+\delta)}{2x+\alpha+\delta}>0, \quad \forall x\geq 0.
\ee
Simple calculations show that
\[
G'(x)=\frac{l}{f}\left(
e^{\frac{x+\delta}{f}l}-1+\frac{2 (x+\alpha)(x+\delta)}{(2x+\alpha+\delta)^2}\right), \quad \forall x\geq 0.
\]
Since $e^{\frac{x+\delta}{f}l}-1$ is clearly positive, we deduce that
\[
G'(x)>0,  \ \forall x\geq 0.
\]
Therefore $G$ is an increasing function on $[0,\infty)$.
Since the assumption \eqref{condG(0)} means that $G(0)$ is non negative, we deduce that
\eqref{eq:Gpos} holds and
consequently $h$ is an increasing function.

Now as $h(0)=\beta_0$, we deduce that there exists a unique $x_\beta\in (0,\infty)$ such that
\[
h(x_\beta)=\beta.
\]
and
\[
\frac{h(x)}{\beta}<1, \quad \forall x\in (0, x_\beta).
\]
This means that for all $(0, x_\beta]$, there exists a unique $\tau(x)\geq 0$ such that
\[
e^{-x \tau(x)}=\frac{h(x)}{\beta},
\]
which is given by
\[
\tau(x):=-\frac{\ln\left(\frac{h(x)}{\beta}\right)}{x}.
\]
According to \eqref{eq:serge19/6:1} we get
\be\label{eq:serge19/6:2}
 \bigcup_{x\in  (0, x_\beta]}\{(\alpha,\beta,\delta,l,f,\tau(x))\}\subset
\matr _+, \quad \forall \beta>\beta_0.
\ee

But the function $\tau$  is clearly a continuous   function in $x\in (0, x_\beta]$ with
$\tau(x_\beta)=0$. Furthermore one easily checks that $\tau$ is   non increasing in $x$
and
\[
\lim_{x\to 0+}\tau(x)=+\infty.
\]
 Consequently the set
 \[
 \bigcup_{x\in  (0, x_\beta]} \{(\alpha,\beta,\delta,l,f,\tau(x))\}=
 \{(\alpha,\beta,\delta,l,f,\tau): \tau\geq 0\},
 \]
 and by \eqref{eq:serge19/6:2}, we deduce that
 \[
  \{(\alpha,\beta,\delta,l,f,\tau): \tau\geq 0\}\subset
\matr _+, \quad \forall \beta>\beta_0.
\]
This proves \eqref{eq:serge19/6:3}.
\end{proof}

\begin{Corollary}\label{c:beta>beta_0}
For $\delta>0$ and $f>0$, assume that
\begin{equation}\label{condG(0)negatiive}
e^{\frac{\delta}{f}l}-1-\frac{l}{f}\frac{\alpha\delta}{\alpha+\delta}< 0.
\end{equation}
Then with  $\beta_0(\alpha,\delta, l,f)$ defined above, there exists a positive real number  $x_0$ (that depends on $\alpha,\delta, l,$ and $f$) such that
\be\label{eq:serge30/6:1}
\left\{ (\alpha,\beta,\delta,l,f,\tau)\in \matr : \delta>0, f>0, \beta>\beta_0(\alpha,\delta, l,f),
\tau \in  \left[0, -\frac{\ln\left(\frac{\beta_0(\alpha,\delta, l,f)}{\beta}\right)}{x_0}\right]\right\}\subset
\matr _+.
\ee
\end{Corollary}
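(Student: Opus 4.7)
The plan is to adapt the intermediate value argument from Lemma \ref{l:beta>beta_0} to the regime where the assumption \eqref{condG(0)} fails. Under \eqref{condG(0)negatiive} we have $G(0)<0$, so $h$ is no longer monotone on the whole half-line $[0,\infty)$: it first decreases from $h(0)=\beta_0$ before eventually increasing to $+\infty$. The idea is therefore to isolate a subinterval of $[0,\infty)$ on which $h$ is increasing and maps exactly onto $[\beta_0,\beta]$, and then to rerun the construction of $\tau(x)$ from Lemma \ref{l:beta>beta_0} on this subinterval.

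First I would reuse the computation already carried out in the proof of Lemma \ref{l:beta>beta_0}, which shows $G'(x)>0$ on $[0,\infty)$, so that $G$ is strictly increasing. Combined with $G(0)<0$ from \eqref{condG(0)negatiive} and $G(x)\to +\infty$ as $x\to+\infty$ (since the exponential term dominates the rational one), this gives a unique $x_1>0$ with $G(x_1)=0$. Because $h'$ has the same sign as $G$, the function $h$ strictly decreases on $(0,x_1)$ and strictly increases on $(x_1,\infty)$, attaining its global minimum at $x_1$ with $h(x_1)<h(0)=\beta_0$. Applying the intermediate value theorem to the continuous, strictly increasing function $h|_{[x_1,\infty)}$, which ranges in $[h(x_1),+\infty)$, I would define $x_0$ as the unique point of $(x_1,\infty)$ for which $h(x_0)=\beta_0$. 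By construction $x_0$ depends only on $\alpha,\delta,l,f$, as required in the statement.

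For a fixed $\beta>\beta_0$, the same monotonicity of $h$ on $[x_1,\infty)$ then yields a unique $x_\beta\in (x_0,\infty)$ with $h(x_\beta)=\beta$. On the closed interval $[x_0,x_\beta]$ the function $h$ is continuous and increasing with image $[\beta_0,\beta]\subset (0,\beta]$, so the map
\[
\tau(x):=-\frac{\ln\bigl(h(x)/\beta\bigr)}{x},\qquad x\in [x_0,x_\beta],
\]
is continuous and non-negative, with $\tau(x_\beta)=0$ and $\tau(x_0)=-\ln(\beta_0/\beta)/x_0>0$. By the intermediate value theorem, the image of $\tau$ on $[x_0,x_\beta]$ contains every value of $[0,-\ln(\beta_0/\beta)/x_0]$. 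For each such $\tau$, the corresponding $x\in [x_0,x_\beta]$ satisfies $e^{-x\tau}=h(x)/\beta$, which by Lemma \ref{Lemma_roots} means exactly that $x>0$ is an eigenvalue of $A_X$ with $X=(\alpha,\beta,\delta,l,f,\tau)$. Hence $X\in\matr_+$ for every such $\tau$, which is precisely \eqref{eq:serge30/6:1}.

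The main subtlety will be choosing the correct candidate for $x_0$: the level $\beta_0$ is hit by $h$ at two points, namely $0$ and the second crossing to the right of the minimum $x_1$, and only the latter choice both is independent of $\beta$ and produces a subinterval on which $\tau$ is well-defined and non-negative. Once $x_0$ is identified in this way, the remainder of the argument is essentially a transcription of the proof of Lemma \ref{l:beta>beta_0}, now carried out on $[x_0,x_\beta]$ instead of $(0,x_\beta]$; the price of giving up the interval $(0,x_1)$ on which $h$ is decreasing is exactly that the range of admissible $\tau$ shrinks from $[0,+\infty)$ to $[0,-\ln(\beta_0/\beta)/x_0]$.
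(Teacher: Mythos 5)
Your proposal is correct and follows essentially the same route as the paper's proof: both exploit that $G$ is increasing with $G(0)<0$, so $h$ decreases then increases, define $x_0$ as the crossing $h(x_0)=\beta_0$ to the right of the minimum, and rerun the construction of $\tau(x)$ on $[x_0,x_\beta]$, obtaining the range $\left[0,-\ln(\beta_0/\beta)/x_0\right]$. Your write-up is in fact slightly more explicit than the paper's (which only sketches these steps), notably in singling out the correct root $x_0$ and in verifying that $\tau$ is well-defined and non-negative on $[x_0,x_\beta]$.
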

\begin{proof}
The proof is the same as the one of Lemma \ref{l:beta>beta_0}, the difference relies on the fact that now $G(0)$ being negative and since $G(x)$ tends to infinity as $x$ goes to infinity,
$h$ will be decreasing in an interval $(0, x_m)$ and increasing on $(x_m, \infty)$. Since $h$ blows up at infinity, there exists positive real number  $x_0$ such that
$h(x_0)=\beta_0$ and as before for all $\beta>\beta_0$
there exists $x_\beta\in (0,\infty)$ such that
\[
h(x_\beta)=\beta.
\]
and
\[
\frac{h(x)}{\beta}<1,\quad  \forall x\in (x_0, x_\beta).
\]
The  proof is finishing as before, the only difference is that the limit of $\tau(x)$ as $x$ goes to $x_0$
is no more $+\infty$ but here $-\frac{\ln\left(\frac{\beta_0}{\beta}\right)}{x_0}$.
\end{proof}

We now check that the sets $\matr _+$ and $\matr _-$ are open and   describe some properties of their boundary.
\begin{Lemma}\label{Rplus_open}
$\matr _+$ is an open subset of $\matr$.
\end{Lemma}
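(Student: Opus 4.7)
The plan is to use the fact that the eigenvalues of $A_X$ with positive real part are precisely the zeros of the \emph{holomorphic} function $G_X$ in the right half-plane (by Lemma \ref{Lemma_roots}, since $\beta>0$ on $\mathcal{R}$), and to combine this with continuous dependence of $G_X$ on the parameters via a Rouché argument. The persistence of an isolated zero in a fixed region under small perturbations of a holomorphic function is a classical device, and it adapts cleanly here because $G_X$ is given by the elementary closed form \eqref{G_lambda}.

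Concretely, I would take $X_0=(\alpha_0,\beta_0,\delta_0,l_0,f_0,\tau_0)\in\mathcal{R}_+$ and pick $\lambda_0\in Sp(A_{X_0})$ with $\Re\lambda_0>0$. Lemma \ref{Lemma_roots} gives $G_{X_0}(\lambda_0)=0$. The function $\lambda\mapsto G_{X_0}(\lambda)$ is entire (the apparent singularity at $-\delta_0$ is removable by the stated convention) and is not identically zero, so $\lambda_0$ is an isolated zero. I choose $r>0$ small enough that the closed disk $\overline{D(\lambda_0,r)}$ lies in the open right half-plane $\{\Re\lambda>0\}$ and contains no other zero of $G_{X_0}$, and I set
$$
m := \min_{\lambda\in\partial D(\lambda_0,r)} |G_{X_0}(\lambda)| > 0.
$$

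Next, from the explicit expression \eqref{G_lambda}, the map $(X,\lambda)\mapsto G_X(\lambda)$ is jointly continuous on $\mathcal{R}\times\mathbb{C}$ (again treating the $\lambda=-\delta$ case via the convention, or noting it does not occur once $X$ is close enough to $X_0$ and $\lambda\in\partial D(\lambda_0,r)$). Since $\partial D(\lambda_0,r)$ is compact, there is a neighborhood $V\subset\mathcal{R}$ of $X_0$ such that
$$
\sup_{\lambda\in\partial D(\lambda_0,r)} |G_X(\lambda)-G_{X_0}(\lambda)| < m/2\quad\text{for all }X\in V.
$$
By Rouché's theorem applied to the holomorphic functions $G_{X_0}$ and $G_X$ on $\overline{D(\lambda_0,r)}$, these functions have the same number of zeros (counted with multiplicity) inside $D(\lambda_0,r)$, in particular at least one. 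Any such zero $\lambda$ satisfies $\Re\lambda>0$, and shrinking $V$ if necessary ensures $\alpha>0$, so $\lambda\neq -\alpha$. By Lemma \ref{Lemma_roots}, $\lambda\in Sp(A_X)$, hence $X\in\mathcal{R}_+$, proving that $V\subset\mathcal{R}_+$.

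The only subtle step is guaranteeing that the joint continuity and the Rouché hypothesis are not spoiled by the poles hidden in \eqref{G_lambda}: after restricting to a small ball around $X_0$, $-\alpha$ and $-\delta$ stay close to $-\alpha_0$ and $-\delta_0$, while $\partial D(\lambda_0,r)$ sits in the right half-plane away from these points, so the denominator $(\lambda+\alpha)(\lambda+\delta)$ is uniformly bounded away from zero on $\partial D(\lambda_0,r)$. Everything else in the estimate is elementary continuity of exponentials and rational expressions in the parameters.
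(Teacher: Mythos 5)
Your proof is correct and follows essentially the same route as the paper: fix $\lambda_0\in Sp(A_{X_0})$ with $\Re\lambda_0>0$, bound $|G_{X_0}|$ from below on a small circle around $\lambda_0$, use (uniform) continuity of $(X,\lambda)\mapsto G_X(\lambda)$ on the compact product set to make $|G_X-G_{X_0}|$ small there, and conclude by Rouch\'e's theorem. Your write-up is in fact somewhat more careful than the paper's (explicitly handling the removable singularity at $-\delta$, the poles at $-\alpha$ and $-\delta$, and the condition $\lambda\neq-\alpha$ needed to invoke Lemma \ref{Lemma_roots}), but the underlying argument is identical.
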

\begin{proof}
Fix $X_0\in\matr_+$. Then, there exists  $\lambda_0\in S_p(A_{X_0})$ such that $\Re\lambda_0>0$.
We define $d_X(\lambda):= |G_{X_0}(\lambda)-G_X(\lambda)|$ for any $\lambda $ such that
$\Re\lambda>0$ and for any $X\in \matr$. Since for $\epsilon>0$ fixed small enough $G_{X_0}(\lambda)\neq 0$ for any $\lambda\in \partial B(\lambda_0,\epsilon)$, then there exists $\overline{\delta}>0$ such that
$$
|G_{X_0}(\lambda)|\geq \overline{\delta},
$$
for any $\lambda\in \partial B(\lambda_0,\epsilon)$. Now, since $d_X(\lambda)$ is uniformly continuous on $\mathcal{B}:=\overline{B(X_0,r)}\times \partial B(\lambda_0,\epsilon)$ for any $r>0$, then we have that for any $\epsilon_0>0$, there exists $\delta=\delta(\epsilon_0)>0$ such that for any $(X,\lambda), (X_0,\lambda)\in \mathcal{B}$ with $||X-X_0||\leq \delta$,
$$
|d_X(\lambda)-d_{X_0}(\lambda)|=|d_X(\lambda)|\leq \epsilon_0.
$$
If we take $\epsilon_0< \overline{\delta}$, then we can apply Rouch\'e theorem and  obtain that  any $X\in B(X_0,\delta)$ belongs to $\matr_+$.
\end{proof}
\begin{Lemma}\label{R0_closed}
$\matr_0$ is a closed subset of $\matr$.
\end{Lemma}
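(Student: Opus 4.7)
The plan is to show that $\matr_0$ is sequentially closed in $\matr$. Take a sequence $(X_n)_n \subset \matr_0$ with $X_n\to X$ in $\matr$, and write $X_n=(\alpha_n,\beta_n,\delta_n,l_n,f_n,\tau_n)$ and similarly for $X$. By definition, for each $n$ there exists $\lambda_n=i\omega_n\in Sp(A_{X_n})$ with $\omega_n\in\RR$. Since $\beta_n>0$ by the definition of $\matr$, Lemma \ref{Lemma_roots} applies and $G_{X_n}(i\omega_n)=0$ with $i\omega_n\neq -\alpha_n$. The goal is then to extract a limit point $i\omega$ of $(i\omega_n)_n$ and conclude, via continuity of $(X,\lambda)\mapsto G_X(\lambda)$, that $i\omega\in Sp(A_X)$.

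First I would establish the boundedness of $(\omega_n)_n$. Since $\Re(i\omega_n)=0\geq 0$, Lemma \ref{bounded_eigenvalue} yields
\[
|\omega_n|=|i\omega_n|\leq C_{\beta_n,\delta_n}=\frac{|\delta_n|+\sqrt{\delta_n^2+8|\beta_n|}}{2}.
\]
Since $\beta_n\to\beta$ and $\delta_n\to\delta$, the right-hand side remains bounded as $n\to\infty$. Hence by Bolzano--Weierstrass a subsequence, still denoted $(\omega_n)_n$, converges to some $\omega\in\RR$.

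Next, I would pass to the limit in the relation $G_{X_n}(i\omega_n)=0$. Looking at the explicit expression
\[
G_X(\lambda)=1-\beta e^{-\lambda\tau}\frac{1-e^{-\frac{\lambda+\delta}{f}l}}{(\lambda+\alpha)(\lambda+\delta)},
\]
(with the convention at $\lambda=-\delta$ specified in Lemma \ref{Lemma_roots}) one sees that the map $(X,\lambda)\mapsto G_X(\lambda)$ is continuous on $\matr\times\C$ after removing the singularity at $\lambda=-\alpha$. Since the limiting point satisfies $i\omega\neq-\alpha$ (because $\alpha>0$ in $\matr$ forces $-\alpha$ to be strictly negative and real, while $i\omega$ is purely imaginary), the evaluation $G_{X_n}(i\omega_n)\to G_X(i\omega)$ is legitimate, and we conclude $G_X(i\omega)=0$.

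Finally, a last application of Lemma \ref{Lemma_roots} (using $\beta>0$ and $i\omega\neq-\alpha$) gives $i\omega\in Sp(A_X)\cap i\RR$, hence $X\in\matr_0$, proving that $\matr_0$ is closed. The only delicate point is the removable singularity at $\lambda=-\delta$ in $G_X$: one needs to check that the continuity argument still works if $i\omega_n\to-\delta$ with $\delta_n\to\delta$ and $\omega_n\to 0$, which can only happen if $\delta=0$; in that case one invokes the convention $\frac{1-e^{-(\lambda+\delta)l/f}}{\lambda+\delta}=\frac{l}{f}$ at $\lambda=-\delta$ together with the fact that this quotient extends to a holomorphic function, preserving joint continuity.
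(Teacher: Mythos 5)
Your proof is correct and follows essentially the same route as the paper's: extract a convergent subsequence of the purely imaginary eigenvalues via Lemma \ref{bounded_eigenvalue}, then pass to the limit in $G_{X_n}(i\omega_n)=0$ by joint continuity and invoke Lemma \ref{Lemma_roots}. Your additional remarks (uniformity of the bound $C_{\beta_n,\delta_n}$, the exclusion of $\lambda=-\alpha$, and the removable singularity at $\lambda=-\delta$) only make explicit details the paper leaves implicit.
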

\begin{proof}
Consider $X_n\in \matr_0$ such that
$$
\lim_{n\rightarrow +\infty} X_n=X.
$$
We show that $X\in \matr_0$. For any $n\in \nat$ there exist $\omega_n$ such that $i\omega_n \in Sp(A_{X_n})$.
By Lemma \ref{bounded_eigenvalue}, we know that  the set $\{\omega_n: n\in \nat\}$ is bounded. Hence, there exists a subsequence $\omega_{n_k}$ of $\omega_n$ such that $\omega_{n_k}\rightarrow \omega$. Now, as
$$
G_{X_n}(i\omega_n)\rightarrow G_X(i\omega), \quad n\rightarrow +\infty,
$$
and since $G_{X_n}(i\omega_n)=0$ for any $n\in\nat$, we obtain that $G_X(i\omega)=0$, which means that  $i\omega\in Sp(A_X)$. This concludes the proof of this lemma.
\end{proof}
\begin{Lemma}\label{Rminus_open}
$\matr_-$ is an open subset of $\matr$.
\end{Lemma}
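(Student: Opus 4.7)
The plan is to mirror the structure used in Lemma \ref{Rplus_open} for $\mathcal{R}_+$, but to combine it with the quantitative bound from Lemma \ref{bounded_eigenvalue} so as to confine all potentially unstable eigenvalues to a compact region. Fix $X_0=(\alpha_0,\beta_0,\delta_0,l_0,f_0,\tau_0)\in\mathcal{R}_-$. I first observe that since $\mathcal{R}$ forces $\beta>0$, every spectral value of $A_X$ is a zero of the holomorphic function $G_X$ (Lemma \ref{Lemma_roots}), and that $G_X(\lambda)$ is jointly continuous in $(X,\lambda)$ once one interprets the ratio at $\lambda=-\delta$ by its analytic extension $l/f$, as declared in the statement of Lemma \ref{Lemma_roots}.

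Next I would pin down a compact region that catches every eigenvalue with non-negative real part. Choose a small bounded neighborhood $V$ of $X_0$ in $\mathcal{R}$ on which $|\beta|$ and $|\delta|$ are bounded by constants $B,D$. By Lemma \ref{bounded_eigenvalue}, applied uniformly in $X\in V$ (with constant $C:=(D+\sqrt{D^{2}+8B})/2$), any eigenvalue $\lambda$ of $A_X$ with $\Re\lambda\ge 0$ automatically satisfies $|\lambda|\le C$. Hence I consider the compact set
\[
K:=\{\lambda\in\mathbb{C}:\ \Re\lambda\ge 0,\ |\lambda|\le C\}.
\]
Since $X_0\in\mathcal{R}_-$, the function $G_{X_0}$ has no zero on $K$; being continuous on the compact set $K$, it is bounded below in modulus:
\[
\eta:=\min_{\lambda\in K}|G_{X_0}(\lambda)|>0.
\]

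The final step is a uniform continuity argument. The map $(X,\lambda)\mapsto G_X(\lambda)$ is continuous on the compact set $\overline{V}\times K$ (shrinking $V$ if needed so that $\overline{V}\subset\mathcal{R}$), hence uniformly continuous there. Therefore there exists a neighborhood $W\subseteq V$ of $X_0$ such that
\[
|G_X(\lambda)-G_{X_0}(\lambda)|<\eta/2,\qquad \forall\,(X,\lambda)\in W\times K,
\]
which gives $|G_X(\lambda)|\ge\eta/2>0$ on $W\times K$. Combining with the a priori bound in Lemma \ref{bounded_eigenvalue}, this shows that for every $X\in W$ the function $G_X$ has no zero in the closed right half-plane, i.e.\ $Sp(A_X)\subset\{\Re\lambda<0\}$, so $W\subset\mathcal{R}_-$.

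The only point that requires care, rather than a routine continuity argument, is to make sure that one really has a \emph{uniform} a priori confinement disc in $X$. This is why I separated the choice of $V$ (which secures uniform $\beta,\delta$ bounds and hence a uniform $C$ via Lemma \ref{bounded_eigenvalue}) from the subsequent Rouché/continuity step on the compact $K$; without that preliminary step one would only control eigenvalues near $\lambda_0$, as in the proof of Lemma \ref{Rplus_open}, which is not enough here since one must simultaneously rule out \emph{all} possible new eigenvalues crossing into the closed right half-plane.
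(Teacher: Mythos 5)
Your proof is correct and rests on the same two ingredients as the paper's: the confinement of all eigenvalues with $\Re\lambda\geq 0$ to a compact set via Lemma \ref{bounded_eigenvalue} (applied uniformly over a bounded neighborhood of $X_0$), and the joint continuity of $(X,\lambda)\mapsto G_X(\lambda)$. The paper packages this as sequential closedness of the complement $\matr_-^C$ (extracting a convergent subsequence of eigenvalues $\lambda_n$ with $\Re\lambda_n\geq 0$), whereas you give the direct, quantitative version with a uniform lower bound on $|G_X|$ over the compact half-disc; the two are contrapositive formulations of the same argument.
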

\begin{proof}
We first notice that the complement $\matr_-^C$ of $\matr_-$
is
\[
\matr_-^C=\left\{ X\in \mathcal{R}\ : \ \exists\, \lambda \in Sp(A_X) \ : \ \Re\lambda \geq 0\right\}.
\]
Since  $\matr_-$ is   open if and only if $\matr_-^C$ is closed,
 it suffices to show that $\matr_-^C$ is closed.
 For that purpose, let  $X\in \matr$ and a sequence $X_n\in\matr_-^C$ such that $X_n\to X$, as $n\to +\infty$. Since $X_n\in\matr_-^C$,   there exists $\lambda_n\in Sp(A_{X_n})$ such that $\Re\lambda_n\geq 0$. By Lemma \ref{bounded_eigenvalue} and the fact that the sequence $X_n$ is bounded  there exists a constant $C>0$ (independent of $n$) such that
$$
|\lambda_n|\leq C.
$$
Hence, there exists a subsequence $\lambda_{n_k}$ of $\lambda_n$ such that $\lambda_{n_k}\to \lambda$, which gives us $\lambda\in Sp(A_X)$. Since
\[
\Re \lambda=\lim_{n\to\infty} \Re \lambda_{n_k},
\]
we directly deduce that $\Re \lambda\geq 0$, which proves that $X\in\matr_-^C$.
\end{proof}
\begin{Proposition}\label{null_measure}
$\matr_0$ is measurable and $meas(\matr_0)=0$.
\end{Proposition}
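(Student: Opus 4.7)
The plan is first to reduce measurability to an already proved closedness property, and then to bound $\mathcal R_0$ as the image of a low-dimensional set under a smooth projection, via a Sard-type argument.

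Measurability is immediate: Lemma \ref{R0_closed} gives that $\mathcal R_0$ is closed in $\mathcal R$, hence Borel and therefore Lebesgue measurable.

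For the measure-zero part, by Lemma \ref{Lemma_roots} (combined with the facts that $\alpha>0$ forces $-\alpha\ne i\omega$ for every real $\omega$, and that when $\beta=0$ the spectrum reduces to $\{-\alpha\}$ by the first lemma of Section~4), a tuple $X=(\alpha,\beta,\delta,l,f,\tau)$ lies in $\mathcal R_0$ if and only if there exists $\omega\in\mathbb R$ with $G_X(i\omega)=0$. Hence $\mathcal R_0=\pi(\tilde S)$, where
$$\tilde S:=\{(X,\omega)\in\mathcal R\times\mathbb R\ :\ G_X(i\omega)=0\}\subset\mathbb R^7,$$
and $\pi(X,\omega):=X$ is the projection onto $\mathbb R^6$. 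The map $(X,\omega)\mapsto G_X(i\omega)$ is real-analytic on $\mathcal R\times\mathbb R$ once we adopt the convention of Lemma \ref{Lemma_roots} at $\lambda=-\delta$.

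The key step is to split $\tilde S=\tilde S_0\cup \tilde S_*$ with $\tilde S_0:=\tilde S\cap\{\omega=0\}$ and $\tilde S_*:=\tilde S\cap\{\omega\ne 0\}$. For $(X,\omega)\in\tilde S_*$, a direct computation that uses $G_X(i\omega)=0$ to simplify yields
$$\frac{\partial G_X(i\omega)}{\partial\beta}=-\frac{1}{\beta},\qquad \frac{\partial G_X(i\omega)}{\partial\alpha}=\frac{1}{i\omega+\alpha},$$
so the $2\times 2$ real Jacobian of $(\operatorname{Re}G_X(i\omega),\operatorname{Im}G_X(i\omega))$ with respect to $(\alpha,\beta)$ is
$$\begin{pmatrix}\alpha/(\alpha^2+\omega^2)&-1/\beta\\ -\omega/(\alpha^2+\omega^2)&0\end{pmatrix},$$
whose determinant $-\omega/[\beta(\alpha^2+\omega^2)]$ is nonzero on $\tilde S_*$. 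By the implicit function theorem, $\tilde S_*$ is a $C^\infty$ submanifold of $\mathbb R^7$ of dimension $5$, and $\pi(\tilde S_*)$ is therefore covered by countably many $C^1$-images of open subsets of $\mathbb R^5$ into $\mathbb R^6$; every such image has $6$-dimensional Lebesgue measure zero. On the other hand, $\pi(\tilde S_0)=\{X\in\mathcal R\ :\ G_X(0)=0\}$ and $G_X(0)$ is a real-valued, real-analytic and nontrivial function of $X\in\mathcal R$ (it tends to $1$ as $\beta\to 0$), so its zero set is a proper real-analytic subvariety of $\mathcal R$, and hence also has Lebesgue measure zero. Combining both pieces yields $\mathrm{meas}(\mathcal R_0)=0$.

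The main delicate point is the handling of $\omega=0$, where the Jacobian computed above degenerates and the manifold structure fails; fortunately $G_X(0)$ is automatically real-valued, so this slice reduces to a single real-analytic equation on the five remaining parameters, which is easily handled.
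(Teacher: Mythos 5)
Your proof is correct, but it takes a genuinely different route from the paper's on both halves of the statement. For measurability, the paper invokes the measurable projection theorem applied to $\Psi^{-1}(\{0\})$ where $\Psi(X,\omega)=G_X(i\omega)$; your observation that Lemma \ref{R0_closed} already gives closedness, hence Borel measurability, is simpler and avoids that (comparatively heavy) tool. For the nullity, the paper uses Fubini and shows that for fixed $(\alpha,\beta,\delta,l,f)$ the fiber in $\tau$ is countable: the modulus of the equation $G_X(i\omega)=0$ forces $\omega$ into a finite set $\{\omega_j\}$, and the phase then forces $\omega_j\tau\in-\theta_j+2\pi\mathbb{Z}$. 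Your argument instead exhibits $\{G_X(i\omega)=0,\ \omega\neq 0\}$ as a regular level set of real codimension $2$ in $\RR^7$ (the Jacobian computation in $(\alpha,\beta)$ is correct: the determinant $-\omega/[\beta(\alpha^2+\omega^2)]$ is nonzero since $\beta>0$ on $\matr$), so its projection to $\RR^6$ is a countable union of $C^1$ images of $5$-dimensional sets, hence null, while the slice $\omega=0$ reduces to the zero set of the nontrivial real-analytic function $X\mapsto G_X(0)$. Two remarks on the comparison. First, your explicit treatment of the $\omega=0$ slice actually patches a small lacuna in the paper's argument: when $\omega_j=0$ the phase condition does not constrain $\tau$ at all, and the $\tau$-fiber is $[0,+\infty)$ rather than countable; this occurs only when $G_X(0)=0$, i.e.\ on a null set of the remaining five parameters, so the paper's Fubini conclusion survives, but your decomposition makes this explicit. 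Second, the paper's computational route has the side benefit of producing the concrete equations \eqref{eq:serge19/6:4}--\eqref{eq:serge19/6:5} that are reused in the numerical section, whereas your argument is more structural and would generalize to other parameter-dependent characteristic functions with little change. A minor shared caveat: both arguments implicitly work on $\{f\neq 0\}$ (where $G_X$ is defined) and extend across $\tau=0$; since $\{f=0\}$ is null and $G$ extends real-analytically to $\tau\in\RR$, neither point causes trouble.
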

\begin{proof}
Let us introduce the  map
$$
\Psi:\matr \times \RR \to \C: (X, \omega)\to G_X(i\omega).
$$
Since $\Psi$ is clearly continuous,
 the set $\Psi^{-1}(\{0\})$ is measurable in $\matr\times\RR$.
 Now we notice that
 \[
 \matr_0=\{ X\in \matr \ :\ \exists \,\omega \in \RR \ : \ \Psi(X,\omega))=0\},
 \]
 or equivalenetly $ \matr_0$ is the projection of $\Psi^{-1}(\{0\})$ on $\matr$.
By the measurable  projection theorem we deduce that $\matr_0$ is measurable. So, Fubini's theorem yields
$$
meas(\matr_0)=\int_{\matr} \mathds{1}_{\matr_0} dX =\int_{(\alpha,\beta,\delta,l,f)}\left( \int_0^{+\infty} \mathds{1}_{\matr_0} d\tau\right) d\alpha d\beta d\delta dl df.
$$
Now, we claim that, $ \forall\, \alpha, f, l>0, \ \beta, \delta \in\RR,$ the set
$$
\matr_{0, \alpha, \beta,\delta, f,l}:=\{ \tau\geq 0 : \ \exists\,\omega\in\RR\ : \ G_X(i\omega)=0\}
$$
is countable. Indeed, let us fix $\alpha,\beta,\delta,f,l$ and let $\tau$ vary. Moreover, assume that there exists $\omega\in\RR$ such that $i\omega\in Sp(A_X)$. Hence,
\begin{equation}
\label{sn:eqspectrumiR}
\beta e^{-i\omega\tau} \frac{1-e^{-\frac{i\omega+\delta}{f}l}}{(i\omega+\alpha)(i\omega+\delta)}=1.
\end{equation}
Then, by taking the absolute value, we have
$$
\beta^2 \left( 1+e^{ -\frac{2\delta l}{f}} -2 e^{ -\frac{\delta l}{f}} \cos \left( \frac{\omega l}{f}\right) \right) =\omega^4+\omega^2 \left( (\alpha+\delta)^2-2\alpha\delta \right) +\delta^2\alpha^2,
$$
which is satisfied only for a finite number (call it $I\geq 0$) of $\omega_j\in\RR$, $j=0,\dots, I$
(note that $I$ and $\omega_j$ depend on  $\alpha,\beta,\delta,l,f$). Then coming back to \eqref{sn:eqspectrumiR}, we find
$$
e^{-i\omega_j\tau}=\frac{1}{\beta} \frac{(i\omega_j+\alpha)(i\omega_j+\delta)}{1-e^{-\frac{i\omega_j+\delta}{f}l}}=e^{i\theta_j},
$$
where $\theta_j\in\RR$ depend on $\alpha,\beta,\delta,l,f,\omega_j$, for any $j=0,\dots,I$. Hence, $\omega_j \tau =-\theta_j+2k\pi$, for any $k\in\mathbb{Z}$. Therefore, the set
$
\matr_{0, \alpha, \beta,\delta, f,l}
$
is indeed countable. Finally
as
\[
meas(\matr_0)=\int_{(\alpha,\beta,\delta,l,f)}\left( \int_0^{+\infty} \mathds{1}_{\matr_0} d\tau\right) d\alpha d\beta d\delta dl df
=\int_{(\alpha,\beta,\delta,l,f)}\left( \int_0^{+\infty} \mathds{1}_{\matr_{0, \alpha, \beta,\delta, f,l}} d\tau\right) d\alpha d\beta d\delta dl df,
\]
 we can conclude that $meas(\matr_0)=0.$
\end{proof}
\begin{Lemma}\label{l:bdyR+R-}
The following inclusions hold:
\begin{enumerate}
\item $\partial \matr_-\subset \{ X\in\matr \ : \ \forall \lambda \in Sp(A_X) \ : \Re\lambda \leq 0\} \cap \matr_0.$
\item  $\partial \matr_+ \subset \{ X\in\matr \ : \ \forall \lambda \in Sp(A_X) \ : \Re\lambda \leq 0\} \cap \matr_0.$
\end{enumerate}
\end{Lemma}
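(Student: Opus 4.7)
The plan is to derive both inclusions from three ingredients already established: the openness of $\matr_\pm$ (Lemmas \ref{Rplus_open} and \ref{Rminus_open}), the spectral characterization $\lambda\in Sp(A_X)\iff \lambda\ne -\alpha$ and $G_X(\lambda)=0$ from Lemma \ref{Lemma_roots}, and the uniform bound $|\lambda|\le C_{\beta,\delta}$ on eigenvalues with nonnegative real part from Lemma \ref{bounded_eigenvalue}. The glue will be the joint continuity of $(X,\lambda)\mapsto G_X(\lambda)$. Two ``easy halves'' come for free from openness: if $X\in \partial \matr_-$, then $X\notin \matr_-$, so there exists $\lambda_0\in Sp(A_X)$ with $\Re \lambda_0\ge 0$; dually, if $X\in \partial \matr_+$, then $X\notin \matr_+$ already gives $\Re\lambda\le 0$ for every $\lambda\in Sp(A_X)$, which is exactly the first factor of the stated inclusion in item 2.

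For the remaining part of item 1, I would fix $X\in \partial \matr_-$, a sequence $X_n\in \matr_-$ with $X_n\to X$, and an arbitrary $\lambda\in Sp(A_X)$, and produce $\lambda_n\in Sp(A_{X_n})$ with $\lambda_n\to \lambda$. Since $G_X$ is holomorphic in $\lambda$ and $G_X(\mu)\to 1$ as $\Re\mu\to +\infty$, $G_X$ is not identically zero, so $\lambda$ is an isolated zero; fixing a small circle around $\lambda$ on which $|G_X|\ge \underline\delta>0$, the uniform continuity of $(X,\mu)\mapsto G_X(\mu)$ on compacts makes $|G_{X_n}-G_X|<\underline\delta$ on that circle for $n$ large, so Rouch\'e's theorem supplies a zero $\lambda_n$ of $G_{X_n}$ inside (the same mechanism used in the proof of Lemma \ref{Rplus_open}). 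Since $X_n\in \matr_-$, $\Re\lambda_n<0$, and shrinking the radius gives $\Re\lambda\le 0$. Combining with the easy half, $\Re\lambda_0=0$, so $X\in \matr_0$.

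For the remaining part of item 2, I would fix $X\in \partial \matr_+$, a sequence $X_n\in \matr_+$ with $X_n\to X$, and for each $n$ an eigenvalue $\lambda_n\in Sp(A_{X_n})$ with $\Re \lambda_n>0$. Here Lemma \ref{bounded_eigenvalue} is crucial: since $\beta_n$ and $\delta_n$ converge, the constants $C_{\beta_n,\delta_n}$ are uniformly bounded, hence $|\lambda_n|\le C$ uniformly in $n$. A convergent subsequence $\lambda_{n_k}\to \lambda$ satisfies $\Re\lambda\ge 0$, and joint continuity yields $G_X(\lambda)=\lim_k G_{X_{n_k}}(\lambda_{n_k})=0$; since $\Re\lambda\ge 0>-\alpha$, Lemma \ref{Lemma_roots} gives $\lambda\in Sp(A_X)$. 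The easy half then forces $\Re\lambda=0$, so $X\in \matr_0$.

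The main obstacle is the Rouch\'e continuation in item 1, which requires both the isolation of the zero $\lambda$ and uniform control of $|G_{X_n}-G_X|$ on a prescribed circle; both are tractable but rest on the explicit form of $G_X$. The argument for item 2 is softer, essentially a compactness extraction enabled by the $\tau$-independence of the bound in Lemma \ref{bounded_eigenvalue}.
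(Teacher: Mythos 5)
Your proposal is correct and follows essentially the same route as the paper: openness of $\matr_\pm$ reduces everything to points outside the respective set, the Rouch\'e continuation from the proof of Lemma \ref{Rplus_open} handles the closure of $\matr_-$, and the compactness extraction via Lemma \ref{bounded_eigenvalue} (which the paper packages as the closedness of $\matr_-^C$ in Lemma \ref{Rminus_open}) handles the closure of $\matr_+$. The only cosmetic differences are that the paper runs the Rouch\'e step by contradiction and quotes the already-proved closedness of $\matr_-^C$ for item 2 instead of re-deriving it.
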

\begin{proof}
First of all, since by Lemma \ref{Rminus_open}, $\matr_-$ is open, we have that $\partial \matr_-=\overline{\matr_-}\setminus \matr_-$. Therefore, we prove that
\begin{equation}\label{I1}
\overline{\matr_-}\subset \{X\in\matr \ : \ \forall\,\lambda\in Sp(A_X),\ \Re\lambda\leq 0\}.
\end{equation}
Let $X\in \overline{\matr_-}$. Then, there exists a sequence $X_n\in \matr_-$ such that $X_n\to X$ as $n\to +\infty$. Since $X_n\in \matr_-$, then for any $n$ there exists $\lambda_n \in S_p(A_{X_n})$ such that $\Re\lambda_n<0$. Let $\lambda\in Sp(A_X)$ and suppose by contradiction that $\Re\lambda>0$.  Then, as in Lemma \ref{Rplus_open}, we can apply Rouch\'e theorem and we get that there exists $\epsilon>0$ such that for any $n\in\nat$ there exists $\lambda_n\in Sp(A_{X_n})$ with $\lambda_n\in B(\lambda,\epsilon)\subset \{ \lambda\in\C \ : \ \Re\lambda>0\}$, which is in contradiction with the fact that $X_n\in\matr_-$. Therefore, we get \eqref{I1}. Hence, $\overline{\matr_-}\subset \{ X\in\matr \ : \ \forall \,\lambda\in Sp(A_X),\ \Re\lambda\leq 0\}$ and
$$
\begin{array}{l}
\displaystyle{ \partial\matr_- \subset \{ X\in\matr \ : \ \forall \,\lambda\in Sp(A_X),\ \Re\lambda\leq 0\} \setminus \{ X\in\matr\ : \ \forall\, \lambda\in Sp(A_X), \ \Re\lambda<0\} }\\
\hspace{0.9 cm}
\displaystyle{ =\{ X\in\matr \ :\ \forall\,\lambda\in Sp(A_X), \ \Re\lambda\leq 0\}\cap \matr_0.}
\end{array}
$$
The second statement is a direct consequence of Proposition \ref{null_measure}.
Indeed
$\matr_+$ is clearly a subset of $\matr_-^C$
and consequently
$$
\overline{\matr_+}\subset \matr_-^C.
$$
Using the fact that $\partial\matr_+=\overline{\matr_+}\setminus \matr_+$ (where we used Lemma \ref{Rplus_open}), we get statement $2$.
\end{proof}
Thanks to this lemma together with Proposition \ref{null_measure}, we have the following corollary.
\begin{Corollary}
$\partial\matr_-$ and $\partial\matr_+$ are negligible sets.
\end{Corollary}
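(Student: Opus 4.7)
The plan is to observe that this corollary is an immediate consequence of the two preceding results, so there is essentially no new work beyond invoking inclusion and completeness of Lebesgue measure.

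First I would recall that Lemma \ref{l:bdyR+R-} gives the inclusions
\[
\partial\matr_-\subset \matr_0\quad\text{and}\quad \partial\matr_+\subset \matr_0,
\]
since both of the sets on the right-hand side in that lemma are contained in $\matr_0$ (they are intersections with $\matr_0$). Then I would invoke Proposition \ref{null_measure}, which states that $\matr_0$ is Lebesgue measurable in $\matr\subset\RR^6$ with $\mathrm{meas}(\matr_0)=0$.

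Since $\partial\matr_\pm$ are subsets of a Lebesgue null set, and Lebesgue measure on $\RR^6$ is complete, they are themselves Lebesgue measurable with measure zero, i.e.\ negligible. This finishes the proof.

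There is no genuine obstacle here: the whole argument is a one-line consequence of the inclusion and monotonicity of measure (plus completeness of the Lebesgue measure to infer measurability of $\partial\matr_\pm$, which is in any case automatic since $\partial\matr_\pm$ are closed sets, hence Borel). If one preferred to avoid completeness, one could simply remark that $\partial\matr_\pm$, being boundaries, are closed, and closed subsets of a null Borel set still have outer measure zero, so they are measurable with measure zero.
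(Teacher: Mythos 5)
Your argument is exactly the one the paper intends: the corollary is stated as an immediate consequence of Lemma \ref{l:bdyR+R-} (which places $\partial\matr_\pm$ inside $\matr_0$) and Proposition \ref{null_measure} (which gives $\mathrm{meas}(\matr_0)=0$), and your additional remark on measurability of the boundaries (via completeness or via their being closed) is a correct, if implicit, detail. Nothing is missing.
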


\section{Numerical illustration}

To illustrate our theoretical results, we have fixed $\alpha=\delta=l=f=1$ and let $\beta$ and $\tau$ vary.
First notice that by Theorem \ref{thm:expdecay},
the conditions \eqref{assumptions_constants} and \eqref{condition_tau} reduces
to $0<\beta<1$ and
$e^\tau<2-\beta$, so that
the set
\be\label{l:regionThm4.1}
\{(1,\beta,1,1,1,\tau): \beta<1 \hbox { and } 0\leq \tau<\log (2-\beta)\}\subset \matr_-.
\ee
This yields a stable steady state region, namely a region of pairs $(\beta, \tau)$ in $\matr_-$.

Second, by Lemma \ref{l:beta>beta_0}, the region
\[
\left\{(1,\beta,1,1,1,\tau): \beta>\frac{1}{1-e^{-1}} \hbox { and }   \tau\in [0,\infty)\right\}\subset \matr_+,
\]
which furnishes a  limit cycle oscillation  region.

 To determine the steady state or limit cycle oscillation  of the other  regions
of the half-plane
\[
\{(1,\beta,1,1,1,\tau): \beta\in \mathbb{R} \hbox { and }   \tau\geq 0\},
\]
we first characterized numerically the region  $\matr_0$. To do so, we look for
 $\omega\in\RR$ such that $i\omega\in Sp(A_X)$, hence solution of
 \eqref{sn:eqspectrumiR} (with $\alpha=\delta=l=f=1$), which is equivalent to
 \[
\beta=e^{i\omega\tau} \frac{(i\omega+1)^2}{1-e^{-(i\omega+1)}}.
\]
 Taking the real part and the imaginary part of the right-hand side, we find
\be\label{eq:serge19/6:4}
\beta=\frac{(1-e^{-1} \cos \omega)\left((1-\omega^2) \cos(\omega \tau)-2\omega \sin(\omega \tau)\right) +e^{-1} \sin \omega \left((1-\omega^2) \sin(\omega \tau)+2\omega \cos(\omega \tau)\right)}{1-2 e^{-1} \cos \omega+e^{-2}}
\ee
and
\be\label{eq:serge19/6:5}
0=(1-e^{-1} \cos \omega)\left((1-\omega^2) \sin(\omega \tau)+2\omega \cos(\omega \tau)\right) -e^{-1} \sin \omega \left((1-\omega^2) \cos(\omega \tau)-2\omega \sin(\omega \tau)\right).
\ee

We notice that the right-hand side of \eqref{eq:serge19/6:5} does not depend on $\beta$,
then the idea is to solve \eqref{eq:serge19/6:5} for a finite numbers of $\tau\in [0,10]$, namely by taking
\[
\tau_p=(p-1)\frac{10}{499},
\]
with $1\leq p \leq 500$, we look for the   solutions $\omega_{p,j}\in\RR$, $j=0,\dots, I_p$
of \eqref{eq:serge19/6:5} with the help of the Matlab function \emph{roots}. Then, coming back to \eqref{eq:serge19/6:4}, we find some  $\beta_{p,j}$ and hence some
pairs $(\tau_p, \beta_{p,j})$ in $\matr_0$. These pairs are represented by dotted points in  Figure \ref{fig:illustration}.


\begin{figure}[tb]
\centering
\includegraphics[width=1.0\columnwidth]{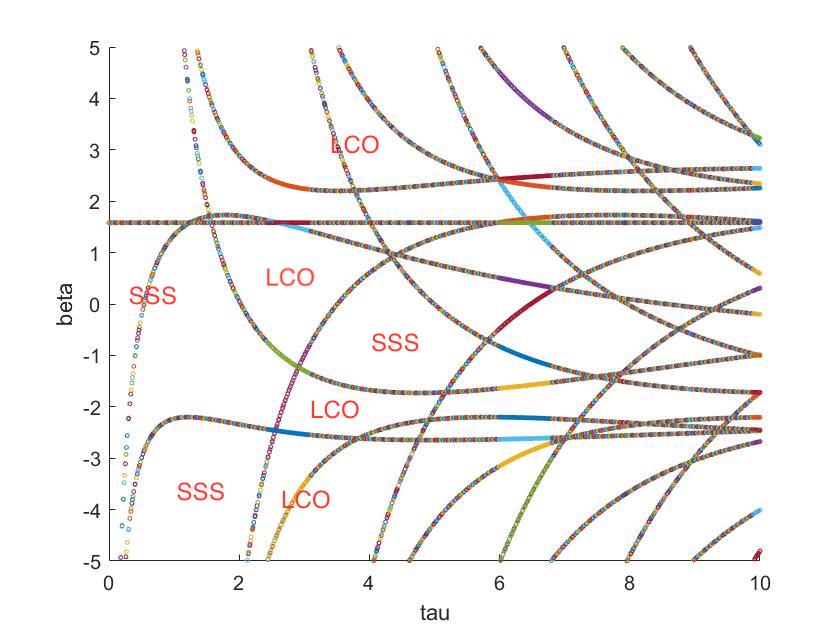}
\caption{Illustration of $\matr_0$, $\matr_-$ and $\matr_+$ with  $\alpha=\delta=l=f=1$. \label{fig:illustration}}
\end{figure}

Note that $\omega=0$ is a solution of \eqref{eq:serge19/6:5} for all $\tau\geq 0$ and therefore we find
$\beta_0= \frac{1}{1-e^{-1}}$.

Let us now make some comments:
\\
1. The curves corresponding to $\matr_0$ in the region  $\beta>\beta_0$ are immersed in $\matr_+$, but
this is not in contradiction with Lemma \ref{l:bdyR+R-}.
\\
2.
By the inclusion \eqref{l:regionThm4.1} and the fact that $\matr_-$ is open, the region around $(0,0)$ is a stable steady state region (in short SSS). On the contrary,  according to Lemma \ref{l:beta>beta_0}
the region $\beta>\beta_0=\frac{1}{1-e^{-1}}$  is a  limit cycle oscillation  region (in short LCO). To determine the nature of the neighboring region, for one fixed  point $(\tau,\beta)$ in such a region, we  use
the Matlab routine \emph{vpasolve} starting from  points $z_0$ randomly varying  in the domain $(\Re z_0, \Im z_0)\in [0,10]\times [-10,10]$ (see  Lemma \ref{bounded_eigenvalue}) that  returns a solution closed to the initial guess $z_0$. By letting $z_0$ vary, if we are not able to find a solution with positive real part, we deduce that the couple $(\tau,\beta)$ is such that $(1,\beta,1,1,1,\tau)$ belongs to $\matr_-$, otherwise it will be in $\matr_+$
and so the full region as well. By this algorithm, we have found that
the regions corresponding to the pairs $(\tau,\beta)=(1,1),\ (1,-3),\ (3,1),\ (4,-1),(3.9,1.1)$ are in $\matr_-$, while
the regions corresponding to the pairs $(\tau,\beta)=(1,3), \ (3,-3),\ (4,-2),(4,-4)$ are in $\matr_+$. We refer to Figure \ref{fig:illustration} for an illustration.



\section*{Acknowledgement}
We want to thank GNAMPA (INdAM) for partial financial support.

\end{document}